\newtheorem{theorem}{Theorem}
\newtheorem{lemma}{Lemma}
\newtheorem{proposition}{Proposition}
\newtheorem{corollary}{Corollary}
\theoremstyle{definition}
\newtheorem{definition}{Definition}
\theoremstyle{remark}
\newtheorem{remark}{Remark}
\newcommand{\real}{\mathbb{R}}
\newcommand{\reals}{\mathbb{R}}
\newcommand{\vT}{v_\mathcal{T}}
\newcommand{\VT}{\hyperref[E:capitalvtdef]{V_\mathcal{T}}}
\newcommand{\IN}{I^*_N}
\newcommand{\Rx}{R_x}
\newcommand{\fullMethodName}{{Information-Theoretic Exact Method}}
\newcommand{\risk}[2]{\mathscr{\hyperref[D:risk]{R}}_{#1}{(#2)}}
\newcommand{\xrisk}[2]{\mathscr{\hyperref[D:xrisk]{X}}_{#1}{(#2)}}
\newcommand{\sz}{\lambda}
\DeclareMathOperator{\dom}{dom}
\newcommand{\oracle}{\mathcal{O}}
\newcommand{\finstances}[1]{\hyperref[D:finstances]{\mathcal{F}^{\mu, L}_{\Rx}(\real^{#1})}}
\newcommand{\invcond}{q}
\begin{document}

\title{On the oracle complexity of smooth strongly convex minimization\thanks{A. Taylor acknowledges support from the European Research Council (grant SEQUOIA 724063).This work was funded in part by the french government under management of Agence Nationale de la recherche as part of the ``Investissements d’avenir'' program, reference ANR-19-P3IA-0001 (PRAIRIE 3IA Institute).}}


\author{Yoel Drori\thanks{Google Research Israel. Email: dyoel@google.com} \and Adrien Taylor\thanks{INRIA, D\'epartement d'informatique de l'ENS, \'Ecole normale sup\'erieure, CNRS, PSL Research University, Paris, France. E-mail: adrien.taylor@inria.fr}}

\maketitle

    \begin{abstract}
        We construct a family of functions suitable for establishing lower bounds on the oracle complexity of first-order minimization of smooth strongly-convex functions.
        Based on this construction, we derive new lower bounds on the complexity of strongly-convex minimization under various inaccuracy criteria. The new bounds match the known upper bounds up to a constant factor, and when the inaccuracy of a solution is measured by its distance to the solution set, the new lower bound \emph{exactly} matches the upper bound obtained by the recent \fullMethodName{} by the same authors, thereby establishing the exact oracle complexity for this class of problems.

    \textbf{Keywords: }{Strongly convex functions, oracle complexity, information-based complexity, optimal methods.}
    \end{abstract}

\section{Introduction}

In this paper, we study the performance of {deterministic} first-order methods for approximating the solution of unconstrained strongly-convex minimization problems, i.e., problems of the form
\[
    x_* \in \arg\min_{x\in \real^d} f(x),
\]
for some $d\in\mathbb{N}$, where $f$ is a strongly-convex function.



This problem setting has the attractive property that first-order methods for solving it are simple, scalable and easy to implement, nevertheless they enjoy `fast' rates of convergence~\cite{nest-book-04}, making highly-accurate solutions relatively easy to attain.
As a result, these problems are considered `tractable' and play a key role in a wide range of applications, from machine learning, parameter estimation, computer vision and many more. These types of problems also appear as steps in methods for solving more complex problems, a property which makes efficient solutions of these problems even more important.

There has been significant progress in recent years in devising efficient methods for solving strongly-convex problems.
Suppose $f$ is a $\mu$-strongly convex function in $C^{1,1}_L(\real^d)$ (the set of continuously differentiable functions that have $L$-Lipschitz gradient),
then the classical gradient method with an appropriately chosen step size attains after $N$ iterations an approximate solution $x_N$ such that~\cite[Theorem~2.1.15]{nest-book-04}
\[
    \| x_N - x_*\| \leq \left(\frac{1-\mu/L}{1+\mu/L}\right)^N \|x_0 - x_*\|,
\]
{where here and through the rest of the paper $\|\cdot\|$ stands for the Euclidean norm.}
This rate of convergence has been improved by the celebrated Accelerated Gradient Methods~\cite{nest-book-04}, which generates a sequence of iterates converging to an optimal point at rate in the order of $O((1-\sqrt{\mu/L})^{N/2})$.
Recently, the Triple Momentum Method~\cite{van2017fastest} improved this rate even further: after $N$ iterations the method attains an approximate solution $x_N$ such that
\[
    \| x_N - x_*\| \leq \sqrt{L/\mu} \, (1-\sqrt{\mu/L})^N \|x_0-x_*\|.
\]
An additional improvement is due to the very recent \fullMethodName~\cite{taylor2021optimal}, which further improves the leading constant in the bound above.
This progress naturally raises the question: can we do even better?

A framework for formalizing this question has been pioneered by Nemirovsky and Yudin in their seminal book~\cite{nemirovsky1992information}.
Under their approach, we assume that the optimization method has access to the objective only via a first-order oracle, $\oracle_f$, that is, a subroutine which given a point in $\real^d$, returns the value of the objective and its gradient at that point. In addition, the method is provided with a starting point $x_0\in\dom(f)$ that is assumed to be ``not too far'' from an optimal solution. We call the pair  $(\oracle_f, x_0)$ a \emph{problem instance}, and for a first-order method $A$ we denote the approximate solution generated by method when applied on this problem instance by $A(\oracle_f, x_0)$.
The cost of the method is then measured by the number of calls it makes to the oracle to obtain its output.



In this work, we consider two criteria for measuring the inaccuracy of an approximate solution: \emph{absolute inaccuracy}, which quantifies the inaccuracy of an approximate solution $\xi$ for a problem instance $(\oracle_f, x_0)$ by the value of $f(\xi)-f^*$, and the \emph{distance to the solution set} which measures the inaccuracy of approximate solution $\xi$ by $\inf_{x_* \in X_*(f)} \|\xi - x_*\|$, where $X_*$ denotes the set of optimal solutions.
The \emph{efficiency estimate} of a first-order method $A$ over a given set of problem instances $\mathcal{I}$ is then defined as the worst-case value of the chosen inaccuracy measure. 
For the absolute inaccuracy measure we denote
\[
    \varepsilon(A; \mathcal{I}):=\sup_{(\oracle_f,x_0)\in \mathcal{I}}f(A(\oracle_f,x_0))-f^*,
\]
and for the distance to the solution set, we denote
\[
    \chi(A; \mathcal{I}):=\sup_{(\oracle_f,x_0)\in \mathcal{I}} \inf_{x_* \in X_*(f)} \|A(\oracle_f,x_0) - x_*\|.
\]

We can now put the main concepts addressed in this paper in formal terms:
denoting by $\mathcal{A}_N$ the set of all {deterministic} first-order methods that perform at most $N$ calls to their first-order oracle,
the \emph{minimax risk}~\cite{Guzman20151} associated with $\mathcal{I}$ is defined as the infimal efficiency estimate that a method in $\mathcal{A}_N$ can attain over $\mathcal{I}$ as a function of the computational effort $N$
\phantomsection\label{D:risk}
\[
    \risk{\mathcal{I}}{N} := \inf_{A\in\mathcal{A}_N} \varepsilon(A; \mathcal{I}).
\]
Similarly, for the distance to the solution set inaccuracy measure, we define the \emph{minimax error} by
\phantomsection\label{D:xrisk}
\[
    \xrisk{\mathcal{I}}{N} := \inf_{A\in\mathcal{A}_N} \chi(A; \mathcal{I}).
\]

The classical notion of \emph{oracle (or information-based) complexity} of the set $\mathcal{I}$ can now be identified as the inverse to the functions defined above, i.e., the minimal computational effort needed by a first-order method in order to reach a given worst-case accuracy level.
For example, under the absolute inaccuracy measure, the oracle complexity of $\mathcal{I}$ is given by
\[
    \mathscr{C}_{\mathcal{I}}(\varepsilon) := \min \{ N : \risk{\mathcal{I}}{N}\leq \varepsilon\}.
\]
In the following, we express our results using the minimax risk and minimax error functions as they prove to be better suited for expressing the dependence of the results on the dimension of the domain.


In this paper we focus on the class of strongly convex functions with Lipschitz-continuous gradient, where the initial point is assumed to be at bounded distance from an optimal point:
\phantomsection\label{D:finstances}
\begin{align*}
    \finstances{d} := \{ (\oracle_f, x_0) : & f \text{ is a $\mu$-strongly convex function in $C^{1,1}_L(\real^d)$},\\ 
    & \| x_* - x_0\|\leq \Rx, \text{ for some } x_*\in X_*(f)\}.
\end{align*}
Lower bounds on the minimax risk and minimax error for this class of problems were derived by Nemirovski~\cite{nemirovsky1991optimality,nemirovsky1992information} and Nesterov~\cite[Theorem~2.1.13]{nest-book-04}
\begin{align*}
    \risk{\finstances{\infty}}{N}
    & \geq \frac{\mu}{2} \left(\frac{1 - \sqrt{\mu/L}}{1 +  \sqrt{\mu/L}}\right)^{2N} \Rx, \\
    \xrisk{\finstances{\infty}}{N}
    & \geq \left(\frac{1 - \sqrt{\mu/L}}{1 + \sqrt{\mu/L}}\right)^{N} \Rx,
\end{align*}
and were shown to be optimal in the sense that the number of steps required to reach a certain accuracy matches the upper bound up to a constant.
Nevertheless, a gap yet remains between the convergence rates of the upper and lower bounds, which is the purpose of this paper to close.
{Note that the bounds above were constructed using quadratic functions, making them also applicable on the subclass of strongly convex quadratic functions, thus partially explaining this gap.}

\subsection{Contribution}
The main contribution of this work is as follows:

\begin{enumerate}
    \item We present a general technique for establishing lower bounds on the oracle complexity of smooth minimization problems (i.e., problems where the gradient of the objective is Lipschitz-continuous, but the objective is not necessarily convex).
    {The technique is based on a construction that allows to smoothly extend a set of function values and gradients over the entire domain in such a way that the resulting function possesses properties making it ``hard'' to optimize by first-order methods and allows standard lower-complexity proof schemes to be applied.}
    \item We derive a lower bound on the value of $\risk{\finstances{2N+1}}{N}$ that matches the upper bound up to a constant, and derive the exact value of $\xrisk{\finstances{2N+1}}{N}$.
    \item {We present a shorter and easier to follow proof for establishing the exact minimax risk for non-strongly-convex smooth convex minimization problems, derived in~\cite{drori2017exact}.
    Notably, the proof is based on the standard set of arguments commonly used for establishing lower bounds
    and does not take advantage of special properties of the construction.}
\end{enumerate}

\section{Preliminaries}
\subsection{Strongly-convex extensions}


In the problem of \emph{strongly-convex extension} (also referred to as \emph{interpolation}), given a finite or infinite set $\mathcal{T}=\{(x_i, g_i, f_i)\}_{i\in I} \subset \reals^d\times\reals^d\times\reals$ where $I$ is a set of indices, the goal is to find a $\mu$-strongly-convex function $f:\reals^d\mapsto\reals$ such that for all $i\in I$
\begin{align*}
    & f(x_i) = f_i, \\
    & \nabla f(x_i) = g_i.
\end{align*}
Note that unless stated otherwise we allow $\mu$ to be negative (where $f$ is $\mu$-strongly convex, as in the $\mu\geq 0$ case, if $f(\cdot) - \frac{\mu}{2}\|\cdot\|^2$ is convex).

Necessary and sufficient conditions for the existence of functions $f$ satisfying the conditions above were presented by Taylor, Hendrickx and Glineur in \cite{taylor2017smooth} for the case where $I$ and $d$ are finite, and independently by Azagra and Mudarra~\cite{azagra2017extension} for arbitrary sets in Hilbert spaces.
In addition, several explicit constructions of extension functions were presented~\cite{daniilidis2018explicit,drori2017exact,taylor2017smooth}.
In this section, we describe a different construction, which will form a main building block for this paper.

The construction is as follows.





\begin{oframed}\vspace{-10pt}
\paragraph{A strongly-convex extension}
Given $L>0$, $\mu < L$ (possibly negative), and $\mathcal{T}=\{(x_i, g_i, f_i)\}_{i\in I} \subset \reals^d\times\reals^d\times \reals$ for some finite index set $I$, we denote by
$\vT: \real^d \times \real^{I}\mapsto \real$ the quadratic function
\begin{equation*}
\begin{aligned}
    \vT(y,\alpha):=  \frac{L}{2}\|y\|^2 &- \frac{L-\mu}{2} \| y - \frac{1}{L-\mu} \sum_{i \in I} \alpha_i (g_i - \mu x_i) \|^2 \\& + \sum_{i\in I} \alpha_i \left( f_i + \frac{1}{2(L-\mu)} \|g_i - L x_i \|^2 - \frac{L}{2} \|x_i\|^2 \right),
\end{aligned}   
\end{equation*}
and by $\VT(y):\real^d\mapsto \real$ the \emph{strongly-convex extension}
\begin{equation}\label{E:capitalvtdef}
    \VT(y):=\max_{\alpha\in \Delta_{I}}\ \vT(y,\alpha),
\end{equation}
where $\Delta_I$ denotes the $|I|$-dimensional unit simplex
\[
\Delta_I:=\{ \alpha \in \real^{I}: \sum_{i\in I} \alpha_i=1,\ \alpha_i\geq 0,\ \forall i\in I\}.
\]
\end{oframed}

Before establishing the main properties of $\VT$ let us first state the standard first-order optimality conditions for the optimization problem $\VT$. For a proof see e.g.,~\cite[Example~2.1.2]{bertsekas1999nonlinear}.



\begin{proposition}
Let  $L>0$, $\mu < L$ (possibly negative), and let $\mathcal{T}=\{(x_i, g_i, f_i)\}_{i\in I}\subset \reals^d\times\reals^d\times \reals$ for some finite index set~$I$.
Fix any $y\in\real^d$ and
suppose $\alpha \in \Delta_I$. Then
$\alpha$ is  optimal for $\VT(y)$ if and only if for any $k\in I$ such that $\alpha_k>0$ and any $j\in I$
\begin{equation}\label{E:forderoptalpha}
\begin{aligned}
& \langle g_j - \mu x_j, y - \frac{1}{L-\mu} \sum_i \alpha_i (g_i - \mu x_i) \rangle +  f_j + \frac{1}{2(L-\mu)} \|g_j - L x_j \|^2 - \frac{L}{2} \|x_j\|^2 \\
&\qquad \leq   \langle g_k - \mu x_k, y - \frac{1}{L-\mu} \sum_i \alpha_i (g_i - \mu x_i) \rangle + f_k + \frac{1}{2(L-\mu)} \|g_k - L x_k \|^2 - \frac{L}{2} \|x_k\|^2
\end{aligned}
\end{equation}
\end{proposition}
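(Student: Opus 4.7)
The plan is to recognize the maximization defining $\VT(y)$ as a concave program over the probability simplex and then invoke the standard KKT characterization. The first step is to check concavity of $\vT(y,\cdot)$ in $\alpha$: since $L-\mu>0$, the inner map $\alpha\mapsto y-\frac{1}{L-\mu}\sum_i \alpha_i(g_i-\mu x_i)$ is affine, so $-\frac{L-\mu}{2}\|\cdot\|^2$ composed with it is a concave quadratic in $\alpha$; the remaining $\sum_i \alpha_i(\cdots)$ contribution is linear; and $\frac{L}{2}\|y\|^2$ is a constant. Hence $\vT(y,\cdot)$ is concave on $\real^I$, the maximum is attained on the compact set $\Delta_I$, and the first-order KKT conditions are both necessary and sufficient for optimality.

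The second step is a direct computation of the partial derivatives. Differentiating through the quadratic term (the factor $L-\mu$ cancels in the chain rule) and through the linear contribution yields
\[
\frac{\partial \vT}{\partial \alpha_j}(y,\alpha) = \left\langle g_j-\mu x_j,\ y - \frac{1}{L-\mu}\sum_i \alpha_i(g_i-\mu x_i)\right\rangle + f_j + \frac{1}{2(L-\mu)}\|g_j-L x_j\|^2 - \frac{L}{2}\|x_j\|^2,
\]
which is precisely the quantity appearing on each side of \eqref{E:forderoptalpha}.

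The third step is to translate the KKT system for the concave program into the stated inequality. Attaching a scalar multiplier $\lambda\in\real$ to the equality $\sum_i\alpha_i=1$ and nonnegative multipliers $\nu_i\geq 0$ to the bounds $\alpha_i\geq 0$, Lagrangian stationarity reads $\partial \vT/\partial \alpha_j=\lambda-\nu_j$, while complementary slackness forces $\nu_j=0$ whenever $\alpha_j>0$. Consequently, for any $k$ with $\alpha_k>0$ the common value $\partial \vT/\partial \alpha_k=\lambda$ dominates $\partial \vT/\partial \alpha_j$ for every $j\in I$, and substituting the explicit formula from the previous step reproduces \eqref{E:forderoptalpha}. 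The converse direction follows by reading the argument backwards: given \eqref{E:forderoptalpha}, one sets $\lambda$ equal to the common value of $\partial \vT/\partial \alpha_k$ over the support of $\alpha$ and defines $\nu_j:=\lambda-\partial \vT/\partial \alpha_j\geq 0$, at which point the entire KKT system is satisfied. I expect no substantive obstacle here; the only delicate point is the bookkeeping in the partial-derivative computation, which is consistent with the authors citing this as a textbook fact.
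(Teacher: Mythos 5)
Your proposal is correct and follows essentially the route the paper intends: the paper simply cites the textbook first-order optimality conditions for a concave maximization over the simplex (Bertsekas, Example~2.1.2), and your argument is a correct self-contained derivation of exactly that fact, with the partial-derivative computation $\partial \vT/\partial\alpha_j = \langle g_j-\mu x_j,\, y-\tfrac{1}{L-\mu}\sum_i\alpha_i(g_i-\mu x_i)\rangle + f_j + \tfrac{1}{2(L-\mu)}\|g_j-Lx_j\|^2-\tfrac{L}{2}\|x_j\|^2$ checking out and concavity in $\alpha$ holding because $L-\mu>0$. Nothing further is needed.
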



The main properties of $\VT(y)$ now follow.
The theorem guarantees that $\VT$ is always $\mu$-strongly convex, has an $L$-Lipschitz gradient, and under certain conditions forms an extension of the given set of points.

\begin{theorem}\label{T:basicprop}
Let  $L>0$, $\mu < L$ (possibly negative), and let $\mathcal{T}=\{(x_i, g_i, f_i)\}_{i\in I}\subset \reals^d\times\reals^d\times \reals$ for some finite index set $I$.
Then $\VT(y)$ is $\mu$-strongly convex, in $C^{1,1}_L$ and satisfies
\[
    \VT(y) \geq f_i + \langle g_i, y - x_i \rangle + \frac{\mu}{2}\|y - x_i \|^2, \quad \forall i\in I.
\]
Furthermore, for any $i\in I$ such that
\begin{equation}\label{E:strongly_convex_interpolation_conditions}
        \frac{1}{2L} \|g_i - g_j\|^2 + \frac{\mu L}{2(L - \mu)} \|x_i - x_j - \frac{1}{L}(g_i - g_j)\|^2 \leq f_i - f_j - \langle g_j, x_i - x_j\rangle, \quad \forall j \in I,
\end{equation}
we have $\VT(x_i)=f_i$ and $\nabla \VT(x_i)=g_i$.
\end{theorem}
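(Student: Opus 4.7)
The plan is to establish the four conclusions in sequence, exploiting two equivalent rewrites of $\vT$. Expanding $-\frac{L-\mu}{2}\|y-c(\alpha)\|^2$ (with $c(\alpha):=\frac{1}{L-\mu}\sum_i\alpha_i(g_i-\mu x_i)$) and collecting terms yields
\[
\vT(y,\alpha)=\tfrac{\mu}{2}\|y\|^2+\langle y,b(\alpha)\rangle+d(\alpha)=\tfrac{L}{2}\|y\|^2-\phi(y,\alpha),
\]
where $b(\alpha):=\sum_i\alpha_i(g_i-\mu x_i)$ is linear in $\alpha$, $d$ depends only on $\alpha$, and $\phi(y,\alpha):=\frac{L-\mu}{2}\|y-c(\alpha)\|^2-K(\alpha)$ is jointly convex in $(y,\alpha)$, with $K(\alpha):=\sum_i\alpha_i\bigl(f_i+\frac{1}{2(L-\mu)}\|g_i-Lx_i\|^2-\frac{L}{2}\|x_i\|^2\bigr)$. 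Substituting the vertex $\alpha=e_i$ simplifies further to
\begin{equation*}
\vT(y,e_i)=f_i+\langle g_i,y-x_i\rangle+\tfrac{\mu}{2}\|y-x_i\|^2, \tag{$\star$}
\end{equation*}
from which the lower-bound inequality $\VT(y)\geq\vT(y,e_i)$ follows at once.

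$\mu$-strong convexity comes from the first rewrite: $\VT(y)-\frac{\mu}{2}\|y\|^2=\sup_{\alpha\in\Delta_I}[\langle y,b(\alpha)\rangle+d(\alpha)]$ is a pointwise supremum of affine functions of $y$, hence convex. Smoothness comes from the second rewrite: $\frac{L}{2}\|y\|^2-\VT(y)=\min_{\alpha\in\Delta_I}\phi(y,\alpha)$. Changing variables to $z=c(\alpha)$, this equals $\min_z\{\frac{L-\mu}{2}\|y-z\|^2+\eta(z)\}$, where $\eta(z):=\inf\{-K(\alpha):\alpha\in\Delta_I,\,c(\alpha)=z\}$ (and $+\infty$ otherwise) is convex (partial infimum of a linear objective over a polytope along an affine map) and proper lsc since $I$ is finite. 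Hence $\frac{L}{2}\|y\|^2-\VT(y)$ is the Moreau envelope of $\eta$ with parameter $1/(L-\mu)$, so it is convex, differentiable, and $(L-\mu)$-smooth. Writing $\nabla\VT(y)=\mu y+(L-\mu)p(y)$ with $p:=\mathrm{prox}_{\eta/(L-\mu)}$ firmly nonexpansive, a short calculation using firm nonexpansivity of $p$ delivers $\|\nabla\VT(y)-\nabla\VT(y')\|\leq L\|y-y'\|$, so $\VT\in C^{1,1}_L$.

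For the interpolation property, by $(\star)$ and Danskin's theorem applied to the now-differentiable $\VT$ with the formula $\nabla_y\vT(y,\alpha)=\mu y+b(\alpha)$, it suffices to verify that $\alpha^*=e_i$ is optimal in $\max_{\alpha\in\Delta_I}\vT(x_i,\alpha)$; indeed $\VT(x_i)=\vT(x_i,e_i)=f_i$ and $\nabla\VT(x_i)=\mu x_i+b(e_i)=g_i$ will then follow. Since $\vT(x_i,\cdot)$ is concave quadratic in $\alpha$, the first-order condition of the preceding Proposition (taken with $k=i$) reduces to $F_j\leq F_i$ for all $j\in I$, where $F_j:=\partial\vT/\partial\alpha_j|_{(x_i,e_i)}$. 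Exploiting the exact second-order Taylor expansion of the quadratic $\vT(x_i,\cdot)$ along the segment $e_i\to e_j$, together with $(\star)$ applied at $\alpha=e_i$ and $\alpha=e_j$, the inequality $F_j\leq F_i$ rearranges to
\[
f_i-f_j-\langle g_j,x_i-x_j\rangle\geq\tfrac{\mu}{2}\|x_i-x_j\|^2+\tfrac{1}{2(L-\mu)}\|(g_i-g_j)-\mu(x_i-x_j)\|^2.
\]
The main (and only) remaining obstacle is the direct algebraic verification that the right-hand side here coincides with $\frac{1}{2L}\|g_i-g_j\|^2+\frac{\mu L}{2(L-\mu)}\|x_i-x_j-\frac{1}{L}(g_i-g_j)\|^2$, an identity of quadratic forms in $(x_i-x_j,g_i-g_j)$, which matches \eqref{E:strongly_convex_interpolation_conditions} and completes the proof.
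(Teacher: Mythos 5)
Your proposal is correct and follows essentially the same route as the paper's proof: the same two rewrites showing $\VT-\frac{\mu}{2}\|\cdot\|^2$ is a max of affine functions and $\frac{L}{2}\|\cdot\|^2-\VT$ is a partial minimum of a jointly convex function, the same substitution $\alpha=e_i$ for the lower bound, and the same verification that $e_i$ is optimal for $\VT(x_i)$ via the first-order conditions, with the final quadratic-form identity (which does check out, both sides having coefficients $\frac{\mu L}{2(L-\mu)}$, $\frac{1}{2(L-\mu)}$, and $-\frac{\mu}{L-\mu}$ on $\|x_i-x_j\|^2$, $\|g_i-g_j\|^2$, and the cross term) reducing your rearranged inequality to~\eqref{E:strongly_convex_interpolation_conditions}. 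The one place you go beyond the paper is the differentiability and $L$-smoothness step, which the paper dispatches in a single sentence and which your Moreau-envelope/firm-nonexpansiveness argument makes fully explicit.
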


\begin{proof}
The result can be derived from \cite[Theorem~1]{drori2017exact} by observing that the function $\frac{L-\mu}{2}\|y\|^2 - \VT(y)$ has the same form considered by the theorem. For the sake of completeness we now give a direct proof.

To establish the smoothness and strong convexity properties, we proceed to show that both $\VT(y) - \frac{\mu}{2}\|y\|^2$ and $\frac{L}{2}\|y\|^2 - \VT(y)$ are convex.
Indeed, $\VT(y) - \frac{\mu}{2}\|y\|^2$ is a maximum of linear functions and therefore convex and $\frac{L}{2}\|y\|^2 - \VT(y)$ is convex from the convexity of $\frac{L}{2}\|y\|^2 - \vT(y,\alpha)$ and a well-known property of the infimum operator (see e.g.,~\cite[Proposition~2.22]{rockafellar2009variational}).
{Because $\VT$ is bounded between two tangent quadratic/linear functions at every point of its domain, its differentiability follows.}

Next, noting that $\alpha=e_i$ is feasible for the maximization problem $\VT$, we get
\begin{align*}
    \VT(y) 
    & \geq \vT(y, e_i) \\
    & = \frac{L}{2}\|y\|^2 -\frac{L-\mu}{2} \| y - \frac{1}{L-\mu} (g_i - \mu x_i) \|^2 + f_i + \frac{1}{2(L-\mu)} \|g_i - L x_i \|^2 - \frac{L}{2} \|x_i\|^2 \\
    & = \frac{\mu}{2}\|y\|^2 + \langle g_i - \mu x_i, y \rangle + f_i + \frac{1}{2(L-\mu)} \left(\|g_i - L x_i \|^2- \|g_i - \mu x_i\|^2\right) - \frac{L}{2} \|x_i\|^2 \\
    & = f_i + \langle g_i, y - x_i \rangle + \frac{\mu}{2}\|y - x_i \|^2.
\end{align*}

Finally, suppose~\eqref{E:strongly_convex_interpolation_conditions} holds, then in order to establish that $\VT(x_i)=f_i$ and $\nabla \VT(x_i)=g_i$ we first show that $\alpha=e_i$ is optimal for $\VT(x_i)$. Indeed, it is straightforward to verify that when $\alpha=e_i$ and $y = x_i$, conditions~\eqref{E:forderoptalpha} and~\eqref{E:strongly_convex_interpolation_conditions} are equivalent, thus $\VT(x_i) = \vT(x_i, e_i) = f_i$ and in addition, from the properties of the max operator, we have $\nabla_y \vT(x_i, e_i) \in \partial \VT(x_i)$, which implies, from the differentiability of $\VT$, that $\nabla \VT(x_i) = \nabla_y \vT(x_i, e_i) = g_i$.
\end{proof}

Note that condition~\eqref{E:strongly_convex_interpolation_conditions} necessarily holds for all $i\in I$ for every $\mu$-strongly convex function $f$ that extends $\mathcal{T}$ over the entire linear space (see e.g.,~\cite[Theorem~4]{taylor2017smooth}).
As a result, if a finite set $\mathcal{T}$ has a strongly convex extension, then $\VT$ provides one such possible extension.




\subsection{Zero-chain functions and lower bounds}

In this section, we briefly review a standard technique for establishing lower complexity bounds. The technique was first introduced by Nemirovsky~\cite{nemirovsky1991optimality,nemirovsky1992information} and has been extensively used to derive lower complexity results, see~\cite{arjevani2019oracle,carmon2019lower,drori2017exact,nest-book-04,ouyang2019lower} to name a few.
Here we follow the convenient presentation of the technique due to Carmon et al.~\cite{carmon2019lower}.
The presentation, for the special case of {deterministic} first-order minimization, is based on the following definition.







\begin{definition}
A differentiable function $f:\reals^d\mapsto \reals$ is called a \emph{first-order zero-chain on $\{v_i\}_{0\leq i\leq n}$} if for all $0\leq i \leq n$
\[
    y\in \mathrm{span} \{v_0,\dots, v_{i-1}\} \Rightarrow \nabla f(y)\in \mathrm{span} \{v_0,\dots, v_i\},
\]
where we use the convention $\mathrm{span} \{\} = \{0\}$.
\end{definition}

{Note that zero-chain functions are typically defined over the canonical unit vectors, however, in the context of this paper it is useful to consider an arbitrary set which will be chosen according to the iterates and gradient values produced by the algorithm.}

Zero-chain functions are well-suited for establishing lower complexity bounds. The standard argument proceeds as follows. As a first step, we consider only the special class of first-order methods which make their first query to oracle at zero and choose the following query points only from the subspace spanned by the previous gradients seen by the method (this class of algorithms is referred to in~\cite{carmon2019lower} as zero-respecting algorithms).
This assumption plays well with the zero-chain property, as together they constrain the location of each iterate to a known subspace. By showing that all points at the final subspace are `bad' under the chosen inaccuracy measure, a bound on the performance of any algorithm in the class can be established.

The next step of the argument is to extend the bound 
to arbitrary {deterministic} first-order methods. This is done via the ``resisting oracle'' technique pioneered by Nemirovsky and Yudin in their seminal book~\cite{nemi-yudi-book83}. This procedure requires that the problem class satisfies the following properties (see~\cite[Proposition~2]{carmon2019lower}):
\begin{enumerate}
    \item The problem class and inaccuracy measure must be invariant under orthogonal transformations.
    \item The domain of the function class must be embeddable in a domain whose dimension is arbitrarily larger.
\end{enumerate}
Suppose the conditions above hold and assume a given algorithm queries a point $x_k$ ($k\geq 0$) outside the span of the previous gradients and denote the component of the query point orthogonal to the span by $v$.
Then by applying an orthogonal transformation on the objective it is possible to find a transformed objective that has the same first-order information at the points already queried by algorithm while being non-informative in the direction $v$ (e.g., having a simple separable quadratic behavior along that direction).
Since the algorithm cannot distinguish between the two functions, it must therefore choose the same query point $x_k$ when given the transformed function, thereby at the worst-case it gains no additional information by querying at the direction $v$.

Since the problem settings which are the focus of this paper satisfy the conditions above, we have the following result.

\begin{theorem}\label{T:zeroChainBase}
Suppose $f_z:\reals^d \mapsto \reals$ is a zero-chain on $v_0,\dots,v_{n-1}$, satisfying $\|x_*\|\leq R_x$ for some $x_*\in X_*(f_z)$ then
\begin{align*}
    & \risk{\finstances{d+n}}{n} \geq \inf_{y\in \mathrm{span} \{v_0,\dots,v_{n-1}\}} f_z(y) - f_z^*, \\
    & \xrisk{\finstances{d+n}}{n} \geq \inf_{y\in \mathrm{span} \{v_0,\dots,v_{n-1}\}} d(y, X_*(f_z)).
\end{align*}
\end{theorem}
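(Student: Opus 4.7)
The plan is to follow the standard two-step reduction sketched in the paragraphs immediately preceding the theorem. First, I would lift $f_z$ to a function $\tilde f:\reals^{d+n}\to\reals$ by setting $\tilde f(y,z):=f_z(y)+\frac{\mu}{2}\|z\|^2$ for $(y,z)\in\reals^d\times\reals^n$. Since $\mu\le L$, the extension $\tilde f$ is $\mu$-strongly convex with $L$-Lipschitz gradient, and since $X_*(\tilde f)=X_*(f_z)\times\{0\}$ the assumption $\|x_*\|\le\Rx$ yields $\|(x_*,0)\|\le\Rx$, so the pair $(\oracle_{\tilde f},0)$ lies in $\finstances{d+n}$. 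Because $\tilde f^*=f_z^*$ and $\tilde f(y,0)=f_z(y)$, both infima on the right-hand sides of the claimed inequalities are unchanged by this embedding, and likewise $d((y,0),X_*(\tilde f))=d(y,X_*(f_z))$.

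Second, I would invoke the resisting-oracle reduction recalled in the preceding subsection to restrict attention to zero-respecting algorithms --- those whose first query is $0$ and whose subsequent queries lie in the span of previously returned gradients. Both of the hypotheses required by that reduction are in place: the class $\finstances{d+n}$ and each of the two inaccuracy measures are invariant under orthogonal transformations, and the $n$ extra ambient dimensions beyond $d$ provide exactly the room needed to rotate any off-span component of a query into a fresh separable quadratic direction without affecting the oracle's history.

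Third, for any zero-respecting algorithm applied to $\tilde f$, I would prove by induction on $k$ that the $k$-th query point lies in $\mathrm{span}\{v_0,\dots,v_{k-1}\}\times\{0\}\subset\reals^{d+n}$. The base case $x_0=0$ is immediate. For the inductive step, observe that all previous queries have vanishing $z$-component, and since $\nabla_z\tilde f(y,0)=0$ the gradients returned so far have vanishing $z$-component as well; the zero-chain property of $f_z$ then places each such gradient in $\mathrm{span}\{v_0,\dots,v_{k-1}\}$, and the zero-respecting assumption forces $x_k$ into their span. After $n$ queries the algorithm's output therefore lies in $\mathrm{span}\{v_0,\dots,v_{n-1}\}\times\{0\}$, on which $\tilde f(y,0)-\tilde f^*=f_z(y)-f_z^*$ and the distance to $X_*(\tilde f)$ equals the distance to $X_*(f_z)$. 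Taking the worst case over outputs in this subspace yields both stated lower bounds.

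The main conceptual obstacle, largely handled by the machinery set up before the theorem, is the resisting-oracle reduction itself: one must verify that at every step the rotated instance still belongs to $\finstances{d+n}$ and produces the same history of oracle answers, which is exactly where the two invariance properties are used and why the dimension must grow from $d$ to $d+n$ rather than remain at $d$.
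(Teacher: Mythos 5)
Your proposal is correct and follows the same standard lift-and-resist argument that the paper itself relies on: the paper gives no proof of this theorem, deferring entirely to Carmon et al.\ (Section~3), and your reconstruction (separable quadratic extension in the extra coordinates, orthogonal-invariance/resisting-oracle reduction to zero-respecting methods, and the span induction) is exactly that argument with the dimension accounting done correctly.
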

For further details and proofs, we refer the reader to \cite[Section~3]{carmon2019lower}.

\section{A family of zero-chain functions}

In this section, we present a set of easily-verifiable conditions under which $\VT$, defined in~\eqref{E:capitalvtdef}, is a zero-chain. Note that the results of this section apply to both strongly-convex and weakly-convex functions.

We start with the following technical lemma.

\begin{lemma}\label{L:zeroalpha}
Let  $L>0$, $\mu < L$ (possibly negative), and let $\mathcal{T}=\{(x_i, g_i, f_i)\}_{i\in I}\subset \reals^d\times\reals^d\times \reals$ for some finite index set~$I$. Suppose $\mathcal{T}$ satisfies
\begin{align}
    &\begin{aligned}\label{E:assumptionfLemma}
        & f_j - \frac{1}{L-\mu} \langle g_j - \mu x_j,  g_i - \mu x_i \rangle +  \frac{1}{2(L-\mu)} \|g_j - L x_j \|^2 - \frac{L}{2} \|x_j\|^2 \\&\quad \geq f_k - \frac{1}{L-\mu} \langle g_k - \mu x_k, g_i - \mu x_i \rangle + \frac{1}{2(L-\mu)} \|g_k - L x_k \|^2 - \frac{L}{2} \|x_k\|^2,
    \end{aligned}
    \quad \forall i \in I,
\end{align}
for some $j, k\in I$ and suppose there exists some $y$ such that
\[
    \langle g_j - \mu x_j, y \rangle > \langle g_k - \mu x_k, y\rangle,
\]
then for any optimal solution $\alpha^*$ of $\VT(y)$ we have $\alpha^*_k=0$.
\end{lemma}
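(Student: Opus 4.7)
The plan is to argue by contradiction: assume $\alpha^*_k>0$ for some optimal $\alpha^*$, and derive an inequality that contradicts the hypothesis on $y$. The key will be to combine the first-order optimality condition for $\VT(y)$ at $\alpha^*$ with a convex combination of the assumed inequalities in \eqref{E:assumptionfLemma}, chosen so that the unknown vector $\sum_i \alpha_i^*(g_i-\mu x_i)$ cancels out.

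Concretely, I would set up the abbreviations $c_i := f_i + \frac{1}{2(L-\mu)}\|g_i - L x_i\|^2 - \frac{L}{2}\|x_i\|^2$ and $z := y - \frac{1}{L-\mu}\sum_i \alpha^*_i(g_i - \mu x_i)$. Since $\alpha^*_k>0$ is assumed, the first-order optimality condition \eqref{E:forderoptalpha} applied with the given $j$ and this $k$ can be rewritten as
\[
c_j - c_k \leq -\langle g_j - g_k - \mu(x_j - x_k),\, z\rangle.
\]
On the other hand, the hypothesis \eqref{E:assumptionfLemma} holds for every $i\in I$, so I may multiply the $i$-th inequality by $\alpha^*_i\geq 0$ and sum over $i\in I$; using $\sum_i \alpha^*_i = 1$ this yields
\[
c_j - c_k \geq \frac{1}{L-\mu}\,\Bigl\langle g_j - g_k - \mu(x_j - x_k),\,\sum_i \alpha^*_i(g_i-\mu x_i)\Bigr\rangle.
\]

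Chaining these two bounds on $c_j-c_k$ and substituting the definition of $z$ causes the terms involving $\sum_i \alpha^*_i(g_i-\mu x_i)$ to cancel exactly, leaving
\[
0 \leq -\langle g_j - \mu x_j - (g_k - \mu x_k),\, y\rangle,
\]
i.e. $\langle g_j - \mu x_j, y\rangle \leq \langle g_k - \mu x_k, y\rangle$, which directly contradicts the strict inequality assumed in the statement. Hence no optimal $\alpha^*$ can have $\alpha^*_k>0$, proving $\alpha^*_k=0$.

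The proof is essentially a bookkeeping calculation; the only non-routine step is spotting that averaging the hypothesis \eqref{E:assumptionfLemma} against the optimal weights $\alpha^*_i$ produces exactly the quantity needed to cancel against $z$ in the first-order condition. Once that observation is made, the remaining algebra is linear and immediate, so the main obstacle is cleanly presenting the substitution without getting lost in the many $(g_i-\mu x_i)$-type terms.
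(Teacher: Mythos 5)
Your proof is correct and follows essentially the same route as the paper's: assume $\alpha^*_k>0$, invoke the first-order optimality condition \eqref{E:forderoptalpha} for the pair $(j,k)$, and contradict it using the $\alpha^*$-weighted convex combination of \eqref{E:assumptionfLemma} together with the strict inequality on $y$. The only difference is organizational — you chain two bounds on $c_j-c_k$ so the term $\sum_i\alpha^*_i(g_i-\mu x_i)$ cancels, whereas the paper substitutes $\sum_i\alpha^*_i=1$ into \eqref{E:forderoptalpha} and compares the two sides termwise — which is purely a matter of bookkeeping.
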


\begin{proof}
Suppose $\alpha^*_k>0$.
Since $\sum_i \alpha^*_i = 1$, the first-order optimality condition condition~\eqref{E:forderoptalpha} can be written as
\begin{align*}
& \langle g_j - \mu x_j, y\rangle + \sum_i \alpha^*_i \left( f_j - \frac{1}{L-\mu} \langle g_j - \mu x_j,  g_i - \mu x_i \rangle +  \frac{1}{2(L-\mu)} \|g_j - L x_j \|^2 - \frac{L}{2} \|x_j\|^2 \right) \\&\quad \leq \langle g_k - \mu x_k, y \rangle+ \sum_i \alpha^*_i  \left( f_k - \frac{1}{L-\mu} \langle g_k - \mu x_k, g_i - \mu x_i \rangle + \frac{1}{2(L-\mu)} \|g_k - L x_k \|^2 - \frac{L}{2} \|x_k\|^2 \right),
\end{align*}
which yields a contradiction in view of~\eqref{E:assumptionfLemma}, the nonnegativity of $\alpha^*_i$, and the assumption on $y$.
\end{proof}

The main result of this section now follows: a set of conditions that guarantee that $\VT$ is a zero-chain function. The construction is based on a given set of $n+m$ triplets, $(x_i, g_i, f_i)$, where the first $n$ triplets will be used to form the `zero-chain space', and latter $m$ triplets are available for enforcing additional properties on $\VT$ (as will be done in the next section).

\begin{theorem}\label{T:zerochain}
Let $L>0$, $\mu < L$ (possibly negative), and let $\mathcal{T}=\{(x_i, g_i, f_i)\}_{i\in I} \subset \reals^d\times \reals^d \times \reals$ for $I = \{0,\dots,n+m-1\}$.
Suppose the following conditions hold
\begin{align}
    & x_0=0, \\
    & \langle g_i - \mu x_i, g_j - \mu x_j \rangle = \langle g_i - \mu x_i, g_k - \mu x_k \rangle, \qquad 0 \leq i < j \leq n - 1, \ k\in K_j, \label{A:gxrel} \\
    &\begin{aligned}\label{E:assumptionf}
        & f_j - \frac{1}{L-\mu} \langle g_i - \mu x_i, g_j - \mu x_j \rangle + \frac{1}{2(L-\mu)} \|g_j - L x_j \|^2 - \frac{L}{2} \|x_j\|^2 \\&\quad \geq f_k - \frac{1}{L-\mu} \langle g_i - \mu x_i, g_k - \mu x_k \rangle + \frac{1}{2(L-\mu)} \|g_k - L x_k \|^2 - \frac{L}{2} \|x_k\|^2,
    \end{aligned} \quad \begin{aligned}
            & 0\leq j\leq n-1, \\ &i\in I, \ k\in K_j,
    \end{aligned}\\
    & g_j - \mu x_j \text{ is linearly separable from } \{g_k - \mu x_k\}_{k\in K_j}, \quad 0\leq j\leq n-1, \label{A:separable}
\end{align}
where 
\[
    K_j := \{ k\in I : \text{ $g_k - \mu x_k$ is linearly independent of $\{g_0 - \mu x_0,\dots,g_j - \mu x_j\}$}\},
\]
then $\VT$ is a zero-chain on $\{g_i - \mu x_i\}_{0\leq i\leq n-1}$.
\end{theorem}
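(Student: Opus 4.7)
The plan is to reduce the zero-chain condition to forcing $\alpha_k^* = 0$ for certain indices, and then to use a perturbation argument to overcome the fact that the relevant inequality is only an equality on the nose. Writing $G_i := g_i - \mu x_i$, direct differentiation of $\vT(y,\alpha)$ in $y$ gives
\[
    \nabla_y \vT(y,\alpha) = \mu y + \sum_{i\in I} \alpha_i G_i,
\]
and since $\VT$ is differentiable (Theorem~\ref{T:basicprop}), Danskin's theorem yields $\nabla \VT(y) = \mu y + \sum_i \alpha^*_i G_i$ for any optimal $\alpha^* \in \Delta_I$. Fix an index $\iota$ with $0 \leq \iota \leq n-1$ and any $y \in \mathrm{span}\{G_0, \dots, G_{\iota-1}\}$; the goal becomes to show $\sum_i \alpha^*_i G_i \in \mathrm{span}\{G_0, \dots, G_\iota\}$, since $\mu y$ already lies there. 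Because $G_j$ lies in that span whenever $j \notin K_\iota$, this in turn reduces to proving $\alpha^*_k = 0$ for every $k \in K_\iota$.

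The natural tool is Lemma~\ref{L:zeroalpha}, applied with the pair $(j,k) = (\iota, k)$ for $k \in K_\iota$: its algebraic hypothesis~\eqref{E:assumptionfLemma} is precisely assumption~\eqref{E:assumptionf} specialized to $j = \iota$ and $k \in K_\iota$ (with the free index of~\eqref{E:assumptionf} playing the role of the index $i$ in~\eqref{E:assumptionfLemma}). The main obstacle appears at the remaining ingredient of the lemma, the strict inequality $\langle G_\iota, y\rangle > \langle G_k, y\rangle$: assumption~\eqref{A:gxrel} precisely says $\langle G_\ell, G_\iota - G_k\rangle = 0$ for all $0 \leq \ell < \iota$ and all $k \in K_\iota$, so $G_\iota - G_k$ is orthogonal to $\mathrm{span}\{G_0,\dots,G_{\iota-1}\}$, and at the given $y$ one obtains only equality.

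To circumvent this degeneracy we perturb $y$ using the separation assumption~\eqref{A:separable}: pick $w \in \real^d$ with $\langle w, G_\iota\rangle > \langle w, G_k\rangle$ for every $k \in K_\iota$, and set $y_\varepsilon := y + \varepsilon w$ for $\varepsilon > 0$. Combining $\langle G_\iota - G_k, y\rangle = 0$ with $\langle G_\iota - G_k, \varepsilon w\rangle > 0$, the strict inequality $\langle G_\iota, y_\varepsilon\rangle > \langle G_k, y_\varepsilon\rangle$ holds for every $k \in K_\iota$. Lemma~\ref{L:zeroalpha} therefore applies at $y_\varepsilon$ and forces $\alpha^*_k(y_\varepsilon) = 0$ for all $k \in K_\iota$ and any optimizer at $y_\varepsilon$, whence
\[
    \nabla \VT(y_\varepsilon) - \mu y_\varepsilon \in \mathrm{span}\{G_0,\dots,G_\iota\}.
\]

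Finally, the Lipschitz continuity of $\nabla \VT$ (Theorem~\ref{T:basicprop}) together with closedness of the target subspace lets us pass to the limit $\varepsilon \downarrow 0$ and obtain $\nabla \VT(y) - \mu y \in \mathrm{span}\{G_0,\dots,G_\iota\}$, so $\nabla \VT(y) \in \mathrm{span}\{G_0,\dots,G_\iota\}$ as required. The case $\iota = 0$ is handled identically, with $y = 0$; in that case the perturbation $\varepsilon w$ alone produces the needed strict inequality. This establishes the zero-chain property on $\{G_i\}_{0\leq i \leq n-1}$.
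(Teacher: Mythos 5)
Your proposal is correct and follows essentially the same route as the paper's proof: reduce the zero-chain property to showing $\alpha^*_k=0$ for $k\in K_j$, perturb $y$ by $\varepsilon$ times the separating vector to obtain the strict inequality needed for Lemma~\ref{L:zeroalpha} (using~\eqref{A:gxrel} to equate the unperturbed inner products), and pass to the limit $\varepsilon\downarrow 0$ via continuity of $\nabla \VT$. The only differences are cosmetic (explicit appeal to Danskin's theorem and to closedness of the subspace).
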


\begin{proof}
Suppose $y \in \mathrm{span} \{g_0 - \mu x_0,\dots,g_{j-1} - \mu x_{j-1}\}$ for some $0\leq j\leq n-1$. From the definition of $\VT$ it follows that
\[
    \nabla \VT(y) = \mu y + \sum_{i\in I} \alpha^*_i (g_i - \mu x_i),
\]
where $\alpha^*$ is optimal for $\nabla \VT(y)$.
Thus, in order to establish that $\VT$ is a zero-chain it is sufficient to show that $\alpha^*_k=0$ for all $k\in K_j$.

Indeed, from~\eqref{A:separable} it follows that there exists a vector $v$ such that
\[
    \langle g_j - \mu x_j, v \rangle > \langle g_k - \mu x_k, v \rangle, \quad \forall k\in K_j,
\]
then setting for some arbitrary $\varepsilon>0$
\[
    y' := y +  \varepsilon v,
\]
we have for all $k\in K_j$
\begin{align*}
    \langle g_j - \mu x_j, y' \rangle 
    & = \varepsilon \langle g_j - \mu x_j, v \rangle + \langle g_j - \mu x_j, y \rangle \\
    & = \varepsilon \langle g_j - \mu x_j, v \rangle + \langle g_k - \mu x_k, y \rangle \\
    & > \varepsilon \langle g_k - \mu x_k, v \rangle + \langle g_k - \mu x_k, y \rangle \\
    & =  \langle g_k - \mu x_k, y' \rangle,
\end{align*}
where the second equality follows from~\eqref{A:gxrel} and from the assumption on $y$.
As a result, by Lemma~\ref{L:zeroalpha}, any optimal solution for $\VT(y')$ satisfies $\alpha^*_k=0$. We conclude that
\[
    \nabla \VT(y') = \mu (y +  \varepsilon v) + \sum_{i\in I \setminus K_j} \alpha^*_i (g_i - \mu x_i).
\]
Finally, taking $\varepsilon\rightarrow 0$, then from the continuity of $\nabla \VT$ (Theorem~\ref{T:basicprop}) it follows that
\[
    \nabla \VT(y) \in  \mathrm{span} \{g_0 - \mu x_0,\dots,g_j - \mu x_j\},
\]
establishing the zero-chain property.
\end{proof}

Note that conditions~\eqref{E:strongly_convex_interpolation_conditions} were not included in the requirements of the theorem above. As a result, although the theorem guarantees that $\VT$ is a zero-chain, the function is not guaranteed to be an extension of $\mathcal{T}$. Of course, if conditions~\eqref{E:strongly_convex_interpolation_conditions} do hold for~$\mathcal{T}$, then $\VT$ will also form an extension.



\section{Lower complexity bounds on strongly convex minimization}

Theorem~\ref{T:zerochain} provides a convenient building block for establishing zero-chain based lower complexity bounds. To complete the standard argument, it remains to find a way of bounding the chosen inaccuracy measure over the span of the zero-chain base  (c.f., Theorem~\ref{T:zeroChainBase}).
For this purpose, in addition to $N$ points that form the `zero-chain space' part of the function: $x_0,\dots,x_{N-1}$, two additional points $x_N$ and $x_*$ are added together with constraints ensuring that the chosen inaccuracy measure attains its minimum over the span of the zero-chain base vectors 
at the pair of points $x_N, x_*$.




This construction is summarized by the following corollary.
Note that hereafter we assume convexity: $\mu\geq 0$.

\begin{corollary}\label{C:lowerBoundPEP}
Let $\Rx \in \reals$ and $0\leq \mu < L$. Suppose $\mathcal{T}=\{(x_i, g_i, f_i)\}_{i\in \IN} \subset \reals^d\times \reals^d \times \reals$ for $\IN := \{0,\dots,N,*\}$ satisfies the following conditions
\begin{align}
    & x_0=0,\ g_* = 0, \label{A:lowerBoundPEPa} \\
    & \|x_0 - x_* \| \leq \Rx, \label{A:lowerBoundPEPd} \\
    & \frac{1}{2L} \|g_j\|^2 + \frac{\mu L}{2(L - \mu)} \|x_* - x_j + \frac{1}{L} g_j\|^2 \leq f_* - f_j - \langle g_j, x_* - x_j\rangle, \quad 
    \begin{aligned}
            j \in \IN,
    \end{aligned} \label{A:lowerBoundPEPe}\\
    & \langle g_i - \mu x_i, g_j - \mu x_j \rangle = \langle g_i - \mu x_i, g_k - \mu x_k \rangle, \qquad 0 \leq i < j \leq N - 1, \ k\in K^*_j, \label{A:lowerBoundPEPf} \\
    &\begin{aligned}
        & f_j - \frac{1}{L-\mu} \langle g_i - \mu x_i, g_j - \mu x_j \rangle +  \frac{1}{2(L-\mu)} \|g_j - L x_j \|^2 - \frac{L}{2} \|x_j\|^2 \\&\quad \geq f_k - \frac{1}{L-\mu} \langle g_i - \mu x_i, g_k - \mu x_k \rangle + \frac{1}{2(L-\mu)} \|g_k - L x_k \|^2 - \frac{L}{2} \|x_k\|^2,
    \end{aligned} \quad \begin{aligned}
            & 0\leq j\leq N - 1, \\ &i\in \IN, \ k\in K^*_j,
    \end{aligned} \label{A:lowerBoundPEPg}\\
    & g_j - \mu x_j \text{ is linearly separable from } \{g_k - \mu x_k\}_{k\in K^*_j}, \qquad 0\leq j \leq N - 1, \label{A:lowerBoundPEPh}
\end{align}
with
\[
    K^*_j := \{ k\in\IN : \text{ $g_k - \mu x_k$ is linearly independent of $\{g_0 - \mu x_0,\dots,g_j - \mu x_j\}$}\},
\]
then:
\begin{enumerate}
    \item If in addition
\begin{align}
    & \langle g_N, g_i - \mu x_i \rangle = 0, \qquad 0 \leq i \leq N - 1, \label{A:fboundconditionA}\\
    & \langle g_N, x_N\rangle = 0, \label{A:fboundconditionB}
\end{align}
it follows that
\[
    \risk{\finstances{d+N}}{N} \geq f_N - f_*.
\]
\item If in addition $\mu>0$ and
\begin{align}
    & \langle x_N - x_*, g_i - \mu x_i \rangle = 0, \qquad 0 \leq i \leq N - 1, \label{A:xboundconditionA}\\
    & \langle x_N - x_*, x_N\rangle = 0, \label{A:xboundconditionB}
\end{align}
it follows that
\[
    \xrisk{\finstances{d+N}}{N} \geq \|x_N - x_*\|.
\]
\end{enumerate}
\end{corollary}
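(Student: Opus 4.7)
The proof is essentially an assembly of the two main earlier results (Theorem~\ref{T:zerochain} and Theorem~\ref{T:basicprop}), followed by an application of Theorem~\ref{T:zeroChainBase}. The plan breaks into three stages: (i)~recognise $\mathcal{T}$ as a valid input to Theorem~\ref{T:zerochain} and to the interpolation part of Theorem~\ref{T:basicprop}; (ii)~check that $(\oracle_{\VT}, x_0)$ belongs to the class $\finstances{d}$ with $\VT^* = f_*$; (iii)~lower-bound the chosen inaccuracy measure on the zero-chain span.

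For stage~(i), I would identify the decomposition $\IN = \{0,\dots,N-1\}\cup\{N,*\}$ with the $n+m$ splitting of Theorem~\ref{T:zerochain} (so $n=N$, $m=2$). Conditions~\eqref{A:lowerBoundPEPa}, \eqref{A:lowerBoundPEPf}, \eqref{A:lowerBoundPEPg} and~\eqref{A:lowerBoundPEPh} are then \emph{literally} the hypotheses of that theorem, and it delivers that $\VT$ is a zero-chain on $\{g_i - \mu x_i\}_{0\leq i\leq N-1}$. Independently, Theorem~\ref{T:basicprop} gives that $\VT$ is $\mu$-strongly convex and in $C^{1,1}_L$, and, upon verifying the interpolation hypothesis~\eqref{E:strongly_convex_interpolation_conditions} at the single index $i=*$ (which, since $g_*=0$, coincides exactly with~\eqref{A:lowerBoundPEPe}), provides $\VT(x_*) = f_*$ and $\nabla \VT(x_*) = g_* = 0$. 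Combined with convexity, this shows $x_*$ is a minimizer of $\VT$ and $\VT^* = f_*$; together with~\eqref{A:lowerBoundPEPd} this certifies $(\oracle_{\VT}, x_0)\in \finstances{d}$, so the conclusions of Theorem~\ref{T:zeroChainBase} apply with domain dimension $d+N$.

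For part~1, I would invoke the generic quadratic lower bound from Theorem~\ref{T:basicprop} at $i=N$,
\[
    \VT(y) \geq f_N + \langle g_N, y - x_N\rangle + \tfrac{\mu}{2}\|y - x_N\|^2,
\]
for $y\in\mathrm{span}\{g_0-\mu x_0,\dots,g_{N-1}-\mu x_{N-1}\}$. By~\eqref{A:fboundconditionA}, $\langle g_N, y\rangle = 0$, and by~\eqref{A:fboundconditionB}, $\langle g_N, x_N\rangle = 0$, so the linear term vanishes; the quadratic term is nonnegative since $\mu\geq 0$. Hence $\VT(y) - \VT^* \geq f_N - f_*$ uniformly over the span, and Theorem~\ref{T:zeroChainBase} yields the claimed bound on $\risk{\finstances{d+N}}{N}$.

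For part~2, strong convexity ($\mu>0$) makes $x_*$ the unique minimizer, so $d(y,X_*(\VT)) = \|y - x_*\|$. For $y$ in the zero-chain span, \eqref{A:xboundconditionA} gives $\langle x_N - x_*, y\rangle = 0$, and \eqref{A:xboundconditionB} gives $\langle x_N - x_*, x_N\rangle = 0$, whence $\langle x_N - x_*, y - x_N\rangle = 0$. A one-line Pythagorean expansion
\[
    \|y - x_*\|^2 = \|y - x_N\|^2 + \|x_N - x_*\|^2
\]
then shows $\|y - x_*\| \geq \|x_N - x_*\|$, and Theorem~\ref{T:zeroChainBase} closes the argument. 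The only real obstacle here is the bookkeeping: carefully matching the indexing of Theorem~\ref{T:zerochain} to $\IN$ and recognising~\eqref{A:lowerBoundPEPe} as precisely the instance of~\eqref{E:strongly_convex_interpolation_conditions} needed to interpolate $x_*$; no further computation is required.
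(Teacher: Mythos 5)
Your proposal is correct and follows essentially the same route as the paper's proof: use \eqref{A:lowerBoundPEPe} as the instance of~\eqref{E:strongly_convex_interpolation_conditions} at $i=*$ to get $\VT(x_*)=f_*$ and $\nabla\VT(x_*)=0$, invoke Theorem~\ref{T:zerochain} via \eqref{A:lowerBoundPEPf}--\eqref{A:lowerBoundPEPh} and then Theorem~\ref{T:zeroChainBase}, and finish each part with the tangent-quadratic lower bound at $i=N$ and the orthogonality/uniqueness argument, respectively. Your explicit Pythagorean expansion in part~2 merely spells out what the paper leaves implicit.
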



\begin{proof}
Since~\eqref{A:lowerBoundPEPe} holds, Theorem~\ref{T:basicprop} implies that $\nabla \VT(x_*) = g_* = 0$ and $\VT(x_*)=f_*$.
We conclude that $x_* \in X_*(\VT)$, ${\VT}^*=f_*$ and thus
\begin{align*}
    & \|x_0 - x_*\| \leq \Rx, \quad \text{ for $x_*\in X_*(\VT)$}.
\end{align*}

Assumptions~\eqref{A:lowerBoundPEPf}--\eqref{A:lowerBoundPEPh} imply by Theorem~\ref{T:zerochain} that $\VT$ is a zero-chain on $\{g_i - \mu x_i\}_{0\leq i\leq N-1}$ therefore, by Theorem~\ref{T:zeroChainBase}

\begin{align*}
    & \risk{\finstances{d+N}}{N} \geq \inf_{y\in \mathrm{span} \{g_i - \mu x_i\}_{0\leq i\leq N-1}} \VT(y) - \VT^*, \\
    & \xrisk{\finstances{d+N}}{N} \geq \inf_{y\in \mathrm{span} \{g_i - \mu x_i\}_{0\leq i\leq N-1}} d(y, X_*(\VT)).
\end{align*}

To establish the first claim, note that by Theorem~\ref{T:basicprop}
\[
    \VT(y) \geq f_N + \langle g_N, y - x_N \rangle + \frac{\mu}{2}\|y - x_N \|^2,
\]
it then follows from~\eqref{A:fboundconditionA} and~\eqref{A:fboundconditionB} that
\[
    \VT(y) \geq f_N + \frac{\mu}{2}\|y - x_N \|^2 \geq f_N, \quad \forall y \in \mathrm{span} \{g_i - \mu x_i\}_{0\leq i\leq N - 1},
\]
and thus get the desired bound on $\risk{\finstances{d+N}}{N}$.

The second bound follows similarly, by noting that for strongly convex functions ($\mu>0$) the minimizer $x_*$ is unique, therefore conditions~\eqref{A:xboundconditionA} and~\eqref{A:xboundconditionB} imply that
\[
    d(y, X_*(\VT)) = \|y-x_*\| \geq \|x_N - x_*\|, \quad \forall y \in \mathrm{span} \{g_i - \mu x_i\}_{0\leq i\leq N - 1}.
\]
\end{proof}

\begin{remark}
An identical argument could be used to derive bounds when the inaccuracy of an approximate solution is measured by its gradient norm. However, it is not clear how to add constraints ensuring that the gradient norm of $\VT$ becomes bounded over the relevant subspace. We therefore leave the analysis of this case for future work.
\end{remark}


We are now ready to present the main result of this section: a simple criterion for establishing lower complexity bounds on convex and strongly-convex minimization problems.

\begin{theorem}\label{T:lowerPEP}
Let $\Rx \in \reals_+$ and $0\leq \mu < L$.
Suppose the sequences $\{\gamma_i\}_{0\leq i\leq N}$ and $\{\delta_i\}_{0\leq i\leq N}$ satisfy the following conditions:
\begin{align}
    & 0\leq \frac{\mu}{L} \delta_i \leq \gamma_i \leq \delta_i, \quad i=0,\dots,N, \label{A:nonneg} \\
    & \gamma_{i+1} \delta_{i+1} \leq - (\gamma_i - \delta_i)(\gamma_i - \frac{\mu}{L} \delta_i), \quad i=0,\dots,N-1, \label{A:lowerPEPa} \\
    & \sum_{t=0}^N \delta_t^2 \leq \Rx^2, \label{E:rxdef}
\end{align}
then
\begin{align*}
    & \risk{\finstances{2N+1}}{N} \geq \frac{L^2}{2(L-\mu)} \left(2 \gamma_N \delta_N - \gamma_N^2  - \frac{\mu}{L} \delta_N^2 \right).
\end{align*}
Furthermore, if $\mu>0$ we have
\begin{align*}
    & \xrisk{\finstances{2N+1}}{N} \geq \delta_N.
\end{align*}
\end{theorem}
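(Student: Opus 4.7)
The plan is to apply Corollary~\ref{C:lowerBoundPEP} to an explicit instance $\mathcal{T} = \{(x_i, g_i, f_i)\}_{i \in \IN}$ in $\reals^{N+1}$ built from the sequences $\{\gamma_i\}$ and $\{\delta_i\}$. I fix orthonormal vectors $e_0, \dots, e_N$ and set
\[
    x_j := -\sum_{t=0}^{j-1} \delta_t\,e_t \ (j \leq N), \quad x_* := -\sum_{t=0}^{N}\delta_t\,e_t, \quad g_j := L\gamma_j e_j + \mu x_j \ (j<N), \quad g_N := L\gamma_N e_N, \quad g_* := 0,
\]
which places each ``chain direction'' $g_i - \mu x_i = L\gamma_i e_i$ (for $i \leq N-1$) along its own basis axis. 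I set $f_* := 0$ and determine the $f_j$'s by saturating the interpolation inequality~\eqref{A:lowerBoundPEPe} between $x_j$ and $x_*$. A direct expansion, using $g_N/L = \gamma_N e_N$ and $x_* - x_N = -\delta_N e_N$, shows that this saturation at $j = N$ yields precisely $f_N - f_* = \frac{L^2}{2(L-\mu)}\bigl(2\gamma_N\delta_N - \gamma_N^2 - (\mu/L)\delta_N^2\bigr)$, matching the desired risk bound.

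Most conditions of Corollary~\ref{C:lowerBoundPEP} are then immediate: $x_0 = 0$ and $g_* = 0$ by construction; $\|x_0 - x_*\|^2 = \sum_{t=0}^{N}\delta_t^2 \leq \Rx^2$ is exactly~\eqref{E:rxdef}; the equality~\eqref{A:lowerBoundPEPf} holds because both inner products vanish by orthogonality of the chain directions; and separability~\eqref{A:lowerBoundPEPh} is witnessed by $v = e_j$ using $\gamma_j \geq (\mu/L)\delta_j$ from~\eqref{A:nonneg} (the boundary case of equality can be handled by a perturbation argument). The side orthogonality conditions for the two parts also follow by inspection: both $g_N = L\gamma_N e_N$ and $x_N - x_* = \delta_N e_N$ are perpendicular to every $g_i - \mu x_i$ with $i < N$ and to $x_N = -\sum_{t<N}\delta_t e_t$, giving~\eqref{A:fboundconditionA}--\eqref{A:fboundconditionB} and~\eqref{A:xboundconditionA}--\eqref{A:xboundconditionB}. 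Furthermore, $\|x_N - x_*\| = \delta_N$ exactly, which yields the $\xrisk$ bound directly.

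The main technical obstacle is the zero-chain ordering condition~\eqref{A:lowerBoundPEPg}. Because the chain directions are orthogonal and $g_N - \mu x_N$, $g_* - \mu x_*$ lie in $\mathrm{span}\{e_0,\ldots,e_N\}$ with explicit components, the inner products appearing in~\eqref{A:lowerBoundPEPg} simplify sharply: for $i \leq N-1$ with $i \notin \{j,k\}$ they vanish, and the remaining cases ($i \in \{j, k, N, *\}$) are each one-dimensional along a single basis vector. After introducing $h_j := f_j + \frac{1}{2(L-\mu)}\|g_j - Lx_j\|^2 - \frac{L}{2}\|x_j\|^2$ and substituting the saturated $f_j$ values, the consecutive case $k = j+1$ collapses to precisely $\gamma_{j+1}\delta_{j+1} \leq (\delta_j - \gamma_j)\bigl(\gamma_j - (\mu/L)\delta_j\bigr)$ for each $j$, which is the recursion~\eqref{A:lowerPEPa}. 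The remaining cases $k > j+1$ and $k \in \{N, *\}$ follow from these consecutive inequalities by telescoping together with the nonnegativity~\eqref{A:nonneg}. Once~\eqref{A:lowerBoundPEPg} is verified, both claimed lower bounds follow immediately from Corollary~\ref{C:lowerBoundPEP}.
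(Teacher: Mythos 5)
Your overall strategy is the paper's: build an explicit $\mathcal{T}$ in $\reals^{N+1}$ from $\{\gamma_i\},\{\delta_i\}$ with $x_j=-\sum_{t<j}\delta_t e_t$, $x_*=-\sum_{t\le N}\delta_t e_t$, $f_*=0$, $f_j$ saturating~\eqref{A:lowerBoundPEPe}, and feed it to Corollary~\ref{C:lowerBoundPEP}. However, your modification of the gradients, $g_j:=L\gamma_j e_j+\mu x_j$ for $j<N$ (so that the chain directions $g_j-\mu x_j=L\gamma_j e_j$ become orthonormal up to scaling), breaks the construction when $\mu>0$. Condition~\eqref{A:lowerBoundPEPf} is an \emph{equality} that must hold for every $k\in K^*_j$, including $k=*$ and $k=N$. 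Since $g_*=0$ is forced by~\eqref{A:lowerBoundPEPa} and $x_*=-\sum_{t\le N}\delta_te_t$ is forced by the argument, you have $g_*-\mu x_*=\mu\sum_{t\le N}\delta_te_t$, hence
\begin{equation*}
\langle g_i-\mu x_i,\,g_*-\mu x_*\rangle=L\mu\gamma_i\delta_i\neq 0=\langle g_i-\mu x_i,\,g_j-\mu x_j\rangle\quad(i<j\le N-1),
\end{equation*}
and the same mismatch occurs for $k=N$ because you were forced to drop the ``$+\mu x_N$'' term from $g_N$ to satisfy~\eqref{A:fboundconditionA}--\eqref{A:fboundconditionB}. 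So your claim that ``\eqref{A:lowerBoundPEPf} holds because both inner products vanish by orthogonality'' is false precisely in the strongly convex regime, the hypotheses of Corollary~\ref{C:lowerBoundPEP} are not met, and the zero-chain argument (which uses this equality to control the optimal $\alpha$ when $\alpha_N$ or $\alpha_*$ might be active) collapses. There is no way to patch this while keeping the chain directions orthogonal.

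The paper instead takes $\hat g_j:=L\gamma_je_j$ for \emph{all} $j$, so $\hat g_j-\mu\hat x_j=L\gamma_je_j+\mu\sum_{t<j}\delta_te_t$. These are not orthogonal, but every later chain direction (including those of $x_N$ and $x_*$) has the \emph{same} inner product $L\mu\gamma_i\delta_i+\mu^2\sum_{t<i}\delta_t^2$ with $\hat g_i-\mu\hat x_i$, which is exactly what~\eqref{A:lowerBoundPEPf} demands. This choice is what makes the remaining verifications nontrivial: separability~\eqref{A:lowerBoundPEPh} needs two cases ($v=e_j$ only works when $L\gamma_j\neq\mu\delta_j$), and~\eqref{A:lowerBoundPEPg} requires a genuine case analysis over $i\in\{<j,\,j,\,j+1,\,>j+1,\,*\}$ rather than the clean ``everything vanishes, then telescope'' reduction you describe. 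Your construction and the paper's coincide only when $\mu=0$; for $\mu>0$, which is where the second (and sharpest) claim lives, your proof does not go through.
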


The proof proceeds by exhibiting a set $\mathcal{T}$ which satisfies the conditions of Corollary~\ref{C:lowerBoundPEP}. As the proof is rather technical, we postpone it to Appendix~\ref{A:Pepproof}.

Based on the theorem, we now derive several lower complexity bounds.
As a first example, the next result establishes a simple lower bound for the non-strongly convex case, $\mu=0$.
\begin{corollary}
Let $L>0, \Rx>0$ and $N\in \mathbb{N}$. Then
\begin{align*}
    \risk{\mathcal{F}^{0, L}_{\Rx}(\real^{2N+1})}{N}
    \geq \left(\frac{1}{(N+1)^2} - \frac{1}{4(N+1)^3}\right) \frac{L\Rx^2}{2} \geq \frac{3L\Rx^2}{8(N+1)^2}.
\end{align*}    
\end{corollary}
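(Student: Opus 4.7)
The plan is to apply Theorem~\ref{T:lowerPEP} with $\mu = 0$ and a specific choice of sequences that saturates the main recurrence. I will take $\delta_i = \Rx/\sqrt{N+1}$ constant for all $i$, so that condition~\eqref{E:rxdef} holds with equality. With $\mu = 0$, condition~\eqref{A:lowerPEPa} simplifies to $\gamma_{i+1}\delta \leq \gamma_i(\delta - \gamma_i)$, which I will saturate starting from $\gamma_0 := \delta/2$. Writing $u_i := \gamma_i/\delta$, this is the scalar recursion $u_{i+1} = u_i(1-u_i)$ with $u_0 = 1/2$; a one-line induction then confirms $u_i \in [0, 1/2]$, which in particular verifies condition~\eqref{A:nonneg}.

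The heart of the proof is a sharp enough lower bound on $u_N$. I will pass to the dual variable $a_i := 1/u_i$, for which the recursion rewrites as $a_{i+1} = a_i + 1 + 1/(a_i - 1)$ with $a_0 = 2$, and I will prove by induction that $a_i \leq 2(i+1)$, equivalently $u_i \geq 1/(2(i+1))$. The base case is immediate; for the inductive step I will use that $x \mapsto x + 1/(x-1)$ is increasing on $[2,\infty)$, so the inductive hypothesis yields $a_{i+1} \leq 2(i+1) + 1 + 1/(2i+1) \leq 2(i+2)$, the last inequality holding because $1/(2i+1) \leq 1$ for $i \geq 0$. I expect this short monotonicity-plus-induction argument to be the only nontrivial step; identifying the change of variable $a_i = 1/u_i$ is what makes the recursion tractable and is the natural obstacle.

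Once $u_N \geq 1/(2(N+1))$ is in hand, the rest is direct substitution into the bound of Theorem~\ref{T:lowerPEP}. Since $u \mapsto u(2-u)$ is increasing on $[0,1]$,
\[
    \frac{L}{2}\gamma_N(2\delta_N - \gamma_N) \;=\; \frac{L\delta^2}{2}\, u_N(2-u_N) \;\geq\; \frac{L\Rx^2}{2(N+1)}\left(\frac{1}{N+1} - \frac{1}{4(N+1)^2}\right),
\]
which rearranges to the first inequality in the statement. The final inequality $\frac{1}{(N+1)^2} - \frac{1}{4(N+1)^3} \geq \frac{3}{4(N+1)^2}$ reduces after clearing denominators to $N + 1 \geq 1$, which is trivial.
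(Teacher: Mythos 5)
Your proof is correct and follows essentially the same route as the paper: apply Theorem~\ref{T:lowerPEP} with $\mu=0$, constant $\delta_i=\Rx/\sqrt{N+1}$, and a $\gamma_N$ of size $\delta_N/(2(N+1))$. The only difference is that the paper simply posits $\gamma_i=\Rx/\bigl(2(i+1)\sqrt{N+1}\bigr)$ and checks~\eqref{A:lowerPEPa} as a one-line inequality, whereas you saturate the recursion and recover the same bound $\gamma_N\geq \delta_N/(2(N+1))$ via the reciprocal substitution---a slightly longer but more self-explanatory derivation of the same sequence.
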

\begin{proof}
We show that the choice
\begin{align*}
    & \gamma_i := \frac{\Rx}{2(i+1)\sqrt{N+1}}, \quad i=0,\dots,N,\\
    & \delta_i := \frac{\Rx}{\sqrt{N+1}}, \quad i=0,\dots,N,
\end{align*}
satisfies the conditions of Theorem~\ref{T:lowerPEP}.
First,~\eqref{A:nonneg} and~\eqref{E:rxdef} are trivial to verify, therefore only need to verify~\eqref{A:lowerPEPa}, which holds since for $i=0,\dots,N-1$ we have
\begin{align*}
    \gamma_{i+1} \delta_{i+1}
    & = \frac{1}{2(i+2)}\frac{1}{N+1} \Rx^2 \\
    & \leq \frac{2 i + 1}{4(i+1)^2}\frac{1}{N+1} \Rx^2 \\
    & = \left(\frac{1}{\sqrt{N+1}} - \frac{1}{2(i+1)\sqrt{N+1}}\right) \frac{1}{2(i+1)\sqrt{N+1}} \Rx^2 \\
    & = -(\gamma_i - \delta_i) \gamma_i.
\end{align*}
Finally, 
\[
    2 \gamma_N \delta_N - \gamma_N^2 = \frac{\Rx^2}{(N+1)^2} - \frac{\Rx^2}{4(N+1)^3},
\]
which completes the proof.
\end{proof}

The exact lower bound for the non-strongly convex case, derived in~\cite{drori2017exact}, can be also attained by Theorem~\ref{T:lowerPEP}, although with some additional effort. See an outline of the proof in Appendix~\ref{S:exactBound}. 
Note, however, that the lower bound derived in~\cite{drori2017exact} is slightly stronger, as it makes a weaker requirements on the dimension $d$ of the domain of the problem, i.e., it establishes the results for problems over $\reals^d$ with $d\geq N+1$ instead of $d \geq 2N+1$ as guaranteed by Theorem~\ref{T:lowerPEP} {(the proof in~\cite{drori2017exact} takes advantage of properties of the constructed function that do not hold for general zero-chain functions)}.

Turning to the strongly-convex $\mu>0$ case, a simple bound can be obtained via Theorem~\ref{T:lowerPEP} as follows.
\begin{corollary}
Let $L, \Rx>0$, $0<\mu<L$ and $N\in \mathbb{N}$,
then
\begin{align*}
    \risk{\finstances{2N+1}}{N}
    & \geq \mu\frac{2  - \sqrt{\mu / L}}{1 + \sqrt{\mu / L }} \left(1-\sqrt{\frac{\mu }{L}}\right)^{2N} \Rx^2 \geq \frac{\mu}{2} \left(1-\sqrt{\frac{\mu}{L}}\right)^{2N} \Rx^2, \\
    \xrisk{\finstances{2N+1}}{N}
    & \geq \sqrt{1- \left(1-\sqrt{\frac{\mu }{L}}\right)^2 } \left(1-\sqrt{\frac{\mu }{L}}\right)^{N} \Rx \geq \sqrt[4]{\frac{\mu}{L}} \left(1-\sqrt{\frac{\mu }{L}}\right)^{N} \Rx.
\end{align*}
\end{corollary}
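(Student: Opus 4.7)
My plan is to instantiate Theorem~\ref{T:lowerPEP} with geometric sequences, with the common ratio chosen to maximize the rate of decay permitted by the recurrence \eqref{A:lowerPEPa}. Writing $q := \sqrt{\mu/L}$ so that $\mu/L = q^2$, I look for a fixed ratio $c = \gamma_i/\delta_i$ that makes \eqref{A:lowerPEPa} tight. Substituting $\gamma_i = c \delta_i$ into \eqref{A:lowerPEPa} gives $\delta_{i+1}^2 \leq \frac{(1-c)(c-q^2)}{c}\,\delta_i^2$, and the quantity $(1-c)(c-q^2)/c = -c + (1+q^2) - q^2/c$ is maximized at $c = q$, achieving the value $(1-q)^2$. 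This motivates the ansatz
\[
    \gamma_i := q\,\delta_0\,(1-q)^i, \qquad \delta_i := \delta_0\,(1-q)^i, \qquad i=0,\dots,N,
\]
for a scalar $\delta_0 > 0$ to be fixed by the budget constraint.

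Next I would verify the three hypotheses of Theorem~\ref{T:lowerPEP}. For \eqref{A:nonneg}, I note $q^2 \leq q \leq 1$ since $\mu < L$; for \eqref{A:lowerPEPa}, both sides equal $q(1-q)^2 \delta_i^2$ by direct computation; for \eqref{E:rxdef}, I choose $\delta_0$ to saturate the budget, namely
\[
    \delta_0^2 := R_x^2 \,\frac{1 - (1-q)^2}{1 - (1-q)^{2(N+1)}},
\]
so that $\sum_{t=0}^N \delta_t^2 = R_x^2$. This immediately yields
\[
    \delta_N^2 = R_x^2\,(1-q)^{2N}\,\frac{1 - (1-q)^2}{1 - (1-q)^{2(N+1)}} \;\geq\; R_x^2\,(1-q)^{2N}\bigl(1-(1-q)^2\bigr),
\]
using only $1-(1-q)^{2(N+1)} \leq 1$.

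For the distance bound, Theorem~\ref{T:lowerPEP} gives $\xrisk{\finstances{2N+1}}{N} \geq \delta_N \geq (1-q)^N R_x \sqrt{1-(1-q)^2}$, which is the claimed estimate; the second (weaker) inequality $\sqrt{1-(1-q)^2} = \sqrt{q(2-q)} \geq \sqrt{q} = (\mu/L)^{1/4}$ follows from $q \leq 1 \Rightarrow 2-q \geq 1$. For the function-value bound, I plug in $\gamma_N = q\delta_N$ and $\mu/L = q^2$ to get
\[
    2\gamma_N\delta_N - \gamma_N^2 - \tfrac{\mu}{L}\delta_N^2 = 2q(1-q)\delta_N^2,
\]
and combine with $\frac{L^2}{2(L-\mu)} = \frac{L}{2(1-q)(1+q)}$ to obtain $\risk{\finstances{2N+1}}{N} \geq \frac{Lq}{1+q}\delta_N^2$. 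Substituting the lower bound on $\delta_N^2$ and using $Lq^2 = \mu$ yields $\mu\,\frac{2-q}{1+q}(1-q)^{2N} R_x^2$, and the second inequality $\frac{2-q}{1+q} \geq \frac{1}{2}$ is equivalent to $q \leq 1$.

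The only mildly non-obvious step is the initial optimization over $c$ that identifies $1-q$ as the sharpest geometric rate compatible with \eqref{A:lowerPEPa}; everything else is a routine closed-form computation and a telescoping geometric sum. There is no real obstacle: once the sequences above are written down, all the estimates fall out algebraically.
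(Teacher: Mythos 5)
Your proposal is correct and follows essentially the same route as the paper: both instantiate Theorem~\ref{T:lowerPEP} with the geometric sequences $\gamma_i=\sqrt{\mu/L}\,\delta_i$, $\delta_i\propto(1-\sqrt{\mu/L})^i$, making \eqref{A:lowerPEPa} hold with equality. The only (immaterial) difference is that you normalize $\delta_0$ by the finite sum $\sum_{t=0}^N\delta_t^2=R_x^2$ and then relax to the stated bound, whereas the paper normalizes by the infinite geometric series directly.
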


\begin{proof}
Consider the sequences defined by
\begin{align*}
    & \gamma_j := \sqrt{\frac{\mu}{L}} \sqrt{1- \left(1-\sqrt{\frac{\mu }{L}}\right)^2 } \left(1-\sqrt{\frac{\mu }{L}}\right)^{j} \Rx \\
    & \delta_j := \sqrt{1- \left(1-\sqrt{\frac{\mu }{L}}\right)^2 } \left(1-\sqrt{\frac{\mu }{L}}\right)^{j} \Rx,
\end{align*}
then~\eqref{A:nonneg} holds since $\mu/L\leq \sqrt{\mu/L}\leq 1$ and
\eqref{A:lowerPEPa} holds with equality since
\begin{align*}
    & - (\gamma_j - \delta_j)(\gamma_j - \frac{\mu}{L} \delta_j) \\
    & = - \left(\sqrt{\frac{\mu}{L}} - 1\right)\left(\sqrt{\frac{\mu}{L}} - \frac{\mu}{L}\right) \left(1- \left(1-\sqrt{\frac{\mu }{L}}\right)^2 \right) \left(1-\sqrt{\frac{\mu }{L}}\right)^{2j} \Rx^2 \\
    & = \sqrt{\frac{\mu}{L}} \left(1 - \sqrt{\frac{\mu}{L}}\right)^2 \left(1- \left(1-\sqrt{\frac{\mu }{L}}\right)^2 \right) \left(1-\sqrt{\frac{\mu }{L}}\right)^{2j} \Rx^2 \\
    & = \gamma_{j+1}\delta_{j+1}.
\end{align*}
Furthermore,
\begin{align*}
    \sum_{k=0}^N \delta_k^2 \leq \sum_{k=0}^\infty \delta_k^2 = \Rx^2,
\end{align*}
hence the conditions of Theorem~\ref{T:lowerPEP} are satisfied.
Finally,
\begin{align*}
    & \frac{L^2}{2(L-\mu)} \left( 2 \gamma_N \delta_N - \gamma_N^2 - \frac{\mu}{L} \delta_N^2 \right) \\
    & = \frac{L^2 \Rx^2}{2(L-\mu)} \left( 2 \sqrt{\mu L} - \frac{\mu}{L} -  \frac{\mu}{L} \right) \left(1- \left(1-\sqrt{\frac{\mu }{L}}\right)^2 \right) \left(1-\sqrt{\frac{\mu }{L}}\right)^{2N} \\
    & = \frac{\mu L \Rx^2}{L-\mu} \left(\sqrt{L/\mu} - 1\right) \left(2 \sqrt{\frac{\mu }{L}} -\frac{\mu }{L} \right) \left(1-\sqrt{\frac{\mu }{L}}\right)^{2N} \\
    & = \mu \frac{1 - \sqrt{\mu / L }}{1 - \mu/L} \left(2  - \sqrt{\frac{\mu }{L}} \right) \left(1-\sqrt{\frac{\mu }{L}}\right)^{2N} \Rx^2 \\
    & = \mu\frac{2  - \sqrt{\mu / L}}{1 + \sqrt{\mu / L }} \left(1-\sqrt{\frac{\mu }{L}}\right)^{2N} \Rx^2,
\end{align*}
completing the proof.
\end{proof}

As in the non-strongly convex case, the previous bounds can be further improved at the cost of a greater proof complexity. In the following result, we give such an improvement for the distance to the solution set inaccuracy measure.
The bound \emph{exactly} matches the worst-case performance attained by the \fullMethodName{}~\cite{taylor2021optimal}, thus, as the upper and lower bounds on the complexity of the class are equal, it follows that both bounds are exact and cannot be further improved.
\begin{corollary}\label{C:exactLower}
Let $\Rx>0$, $0<\mu<L$ and $N\in \mathbb{N}$, and define the sequence $\{\sz_i\}$ by,
\begin{equation}\label{D:lambdadef}
\begin{aligned}
    & \sz_0=\sqrt{\mu/L} \\
    & \sz_{i+1}  = \frac{1-\sqrt{\mu/L - (1 - \mu/L) \sz_i^2}}{1+ \sz_i^2} \sz_i
    ,\quad i=0,1,\dots,
\end{aligned}    
\end{equation}
then $\mu/L - (1 - \mu/L) \sz_i^2 \geq 0$ for all $i$, and
\begin{align*}
    & \xrisk{\finstances{2N+1}}{N} \geq \frac{\sz_N \Rx}{\sqrt{\mu/L}}. 
\end{align*}
\end{corollary}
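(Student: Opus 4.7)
The plan is to apply Theorem~\ref{T:lowerPEP} (the distance-to-solution-set bound) with carefully chosen sequences $\{\gamma_i\}_{0 \leq i \leq N}$ and $\{\delta_i\}_{0 \leq i \leq N}$ tailored so that $\delta_N = \sz_N \Rx/\sqrt{\mu/L}$. The proof splits into two pieces: verifying that the sequence $\{\sz_i\}$ is well-defined, and constructing explicit sequences $\{\gamma_i\}, \{\delta_i\}$ satisfying the hypotheses of the theorem.

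For well-definedness, I would show by induction on $i$ that $\sz_i^2 \leq \mu/L$, which in turn implies $\mu/L - (1-\mu/L)\sz_i^2 \geq (\mu/L)^2 \geq 0$. The base case $\sz_0^2 = \mu/L$ is immediate. For the inductive step, set $t_i := \sqrt{\mu/L - (1-\mu/L)\sz_i^2}$; by the hypothesis $t_i \in [0, \sqrt{\mu/L}] \subset [0,1]$. Then the recursion~\eqref{D:lambdadef} gives $\sz_{i+1} = \sz_i (1-t_i)/(1+\sz_i^2)$ with $(1-t_i)/(1+\sz_i^2) \in [0,1]$, hence $\sz_{i+1} \leq \sz_i \leq \sqrt{\mu/L}$, closing the induction.

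For the construction, a key observation is that the recursion~\eqref{D:lambdadef} admits the simpler equivalent form $\sz_{i+1}(1+t_i) = (1 - \mu/L)\sz_i$, obtained by using $1 + \sz_i^2 = (1-t_i^2)/(1-\mu/L)$. I would then propose closed-form expressions for $\gamma_i$ and $\delta_i$ in terms of $\sz_i$ and $t_i$, determined by the requirements that~\eqref{A:lowerPEPa} is saturated for every $0 \leq i \leq N-1$, $\gamma_N$ is set to the lower boundary $(\mu/L)\delta_N$ of~\eqref{A:nonneg}, $\delta_N = \sz_N \Rx/\sqrt{\mu/L}$, and the budget $\sum_{t=0}^N \delta_t^2 = \Rx^2$ is saturated via a telescoping cancellation. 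Condition~\eqref{A:nonneg} would then be verified by bounding $\gamma_i/\delta_i$ using $t_i \in [0, \sqrt{\mu/L}]$ and the reformulated recursion. Applying Theorem~\ref{T:lowerPEP} then yields $\xrisk{\finstances{2N+1}}{N} \geq \delta_N = \sz_N \Rx/\sqrt{\mu/L}$.

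The main obstacle lies in identifying the correct closed-form for $\{\gamma_i\}, \{\delta_i\}$ so that the saturated recursion~\eqref{A:lowerPEPa} correctly encodes the recursion for $\sz_i$. Numerical exploration suggests that the optimal sequence does not admit a short expression like $\delta_i^2 \propto \sz_i^2 - \sz_{i+1}^2$, and that the ratios $\gamma_i/\delta_i$ follow a nontrivial path from $(1+\mu/L)/2$ at $i=0$ down to $\mu/L$ at $i = N$, with the intermediate values coupled to $\sz_i$ through the nested square roots. Once the right parameterization is in hand, the verification is mechanical but algebraically involved, relying on repeated application of the reformulated recursion $\sz_{i+1}(1+t_i) = (1-\mu/L)\sz_i$ and the identity $t_i^2 = \mu/L - (1-\mu/L)\sz_i^2$ to collapse the square roots.
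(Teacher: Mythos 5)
Your first part (well-definedness of $\{\sz_i\}$) is correct and essentially identical to the paper's argument, and your overall strategy is the right one: apply Theorem~\ref{T:lowerPEP} with sequences that saturate the recursion~\eqref{A:lowerPEPa} and the budget~\eqref{E:rxdef}, with the terminal conditions $\gamma_N=(\mu/L)\delta_N$ and $\delta_N=\sz_N\Rx/\sqrt{\mu/L}$ (equivalently $\gamma_N=\sqrt{\mu/L}\,\sz_N\Rx$). The reformulation $\sz_{i+1}(1+t_i)=(1-\mu/L)\sz_i$ is also a correct identity. However, there is a genuine gap: the existence of sequences $\{\gamma_i\},\{\delta_i\}$ meeting all of these requirements simultaneously is exactly the content of the corollary, and you do not establish it. You explicitly concede that you have not found the parameterization and that numerical evidence suggests no simple closed form exists; "once the right parameterization is in hand, the verification is mechanical" leaves the central mathematical object of the proof unconstructed. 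As stated, the argument does not go through.

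The paper resolves this without ever writing a closed form, by induction on $N$. Writing $q:=\mu/L$, the inductive hypothesis is that for horizon $N$ there exist admissible sequences with the recursion and budget saturated and with the endpoint values $\gamma_N=\sqrt{q}\,\sz_N$, $\delta_N=\sz_N/\sqrt{q}$ (taking $\Rx=1$). To pass to $N+1$, one rescales the entries of index $0,\dots,N-1$ by $\sqrt{\alpha_{N+1}}$ with
\[
\alpha_{N+1}:=\sqrt{\frac{q-\sz_{N+1}^2}{q-(1-q)\sz_N^2}},
\]
and prescribes explicit new values at indices $N$ and $N+1$; the uniform rescaling preserves the ratio constraints~\eqref{A:nonneg} and the saturated recursion for the interior indices, so only the last two links of the chain, the new budget identity, and the new endpoint conditions need to be checked. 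The only quantities that must be tracked through the induction are the endpoint pair $(\gamma_N,\delta_N)$ and the sum $\sum_i\delta_i^2$ -- the interior entries are never needed explicitly. If you want to complete your proof along your own lines you would need either to produce the closed form you are missing, or to adopt some such existence argument; without one of the two, the claim $\delta_N=\sz_N\Rx/\sqrt{\mu/L}$ for an admissible pair of sequences remains unproven.
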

The proof establishing the corollary is detailed in Appendix~\ref{A:exactLowerProof}.


\section{Conclusion}
In this work, we presented a construction of a family of zero-chain functions suitable for establishing lower complexity bounds for smooth problems, and showed how the construction can be used to derive lower complexity bounds on the class of strongly-convex minimization. Based on this result, we obtained the following bound on the minimax risk
\begin{align*}
    \risk{\finstances{2N+1}}{N}
    & \geq \max \left( 2\frac{\mu}{L}\frac{2  - \sqrt{\mu / L}}{1 + \sqrt{\mu / L }} \left(1-\sqrt{\frac{\mu }{L}}\right)^{2N}, \frac{1}{\theta_N^2}\right) \frac{L\Rx^2}{2} \\
    & \geq \max\left(\frac{\mu}{L} \left(1-\sqrt{\frac{\mu}{L}}\right)^{2N}, \frac{3}{4(N+1)^2}\right) \frac{L\Rx^2}{2},
\end{align*}
for any $0\leq \mu < L$ with $\theta_N$ defined as in~\eqref{D:thetadef},
and the bound
\begin{align*}
    \xrisk{\finstances{2N+1}}{N}
    & \geq \frac{\sz_N \Rx}{\sqrt{\mu/L}} \geq \sqrt[4]{\frac{\mu}{L}} \left(1-\sqrt{\frac{\mu }{L}}\right)^{N} \Rx,
\end{align*}
for $0< \mu < L$, 
where $\sz_N$ is defined in~\eqref{D:lambdadef}.
These bounds were then shown to exhibit the optimal rate of convergence for this class, and in the case of the distance to the solution set inaccuracy measure the bound is exact.

Note that the results only apply to the absolute inaccuracy and distance to the solution set measures (and possibly other measures solely based on these two).
An open question that remains, is the possibility of using the same construction in obtaining lower bounds when the inaccuracy measure is chosen to be the norm of the gradient. This question naturally arises when attempting to generalize the above results to the class of weakly-convex functions.
We leave the analysis of this case for future research.

Another open question that remains is the \emph{exact} oracle complexity of strongly-convex minimization under the absolute inaccuracy measure.
Preliminary numerical experiments suggest that the upper bound derived in \cite{taylor2021optimal} does not match the best bound attainable by Theorem~\ref{T:lowerPEP}, leading us to raise the conjecture that the best bound attainable by Theorem~\ref{T:lowerPEP} for absolute inaccuracy performance measure is not tight for $\mu>0$, and that a more refined approach is required to attain the exact value of the minimax risk function for this case.

\appendix
\section*{Appendix}

\section{Proof of Theorem~\ref{T:lowerPEP}}\label{A:Pepproof}
The proof proceeds by showing that the requirements of Corollary~\ref{C:lowerBoundPEP} are satisfied taking the set $\mathcal{T}=\{(\hat x_i, \hat g_i, \hat f_i)\}_{i\in \IN}$ defined by
\begin{align*}
    & \hat x_j := - \sum_{t=0}^{j-1} \delta_t e_t, \quad j=0,\dots,N, \\
    & \hat g_j := L \gamma_j e_j, \quad j=0,\dots,N, \\
    & \hat f_j := \frac{L^2}{2(L-\mu)} \left( 2 \gamma_j \delta_j - \gamma_j^2 - \frac{\mu}{L} \sum_{t=j}^N \delta_t^2 \right), \quad j=0,\dots,N, \\
    & \hat x_* := -\sum_{t=0}^N \delta_t e_t, \\
    & \hat g_* := 0, \\
    & \hat f_* := 0.
\end{align*}

To establish the requirements of Corollary~\ref{C:lowerBoundPEP}, first note that \eqref{A:lowerBoundPEPa}, \eqref{A:lowerBoundPEPf}, and \eqref{A:fboundconditionA}--\eqref{A:xboundconditionB} are immediate from the construction, and \eqref{A:lowerBoundPEPd} follows directly from~\eqref{E:rxdef}.
Next, to establish \eqref{A:lowerBoundPEPe}, we substitute the choice of $x_i, g_i$ above to get
\begin{align*}
    \frac{L}{2} \gamma_j^2 + \frac{\mu L}{2(L - \mu)} \left( (\gamma_j - \delta_j)^2 + \sum_{i=j+1}^N \delta_i^2  \right) \leq - \hat f_j + L \gamma_j \delta_j,
\end{align*}
which follows since
\[
    \hat f_j = \frac{L^2}{2(L - \mu)} \left(-\frac{L-\mu}{L} \gamma_j^2 -\frac{\mu}{L}(\gamma_j - \delta_j)^2 + \frac{2(L-\mu)}{L} \gamma_j \delta_j - \frac{\mu}{L} \sum_{i=j+1}^N \delta_i^2 \right).
\]

Linear separability of $\hat g_j - \mu \hat x_j$ and $\{\hat g_k - \mu \hat x_k\}_{k\in K^*_j}$~\eqref{A:lowerBoundPEPh} can be established by considering the following two cases: First, if $L \gamma_j \neq \mu \delta_j$, taking $v=e_j$ gives $\langle \hat g_j - \mu \hat x_j, v\rangle = L \gamma_j$ while for $k>j$ and $k=*$, we have $\langle \hat g_k - \mu \hat x_k, v\rangle = \mu \delta_j$, establishing separability.
Otherwise, when $L \gamma_j = \mu \delta_j$,
\[
(\hat g_k - \mu \hat x_k) - (\hat g_j - \mu \hat x_j) =
\begin{cases}
    L \gamma_k e_k + \mu \sum_{t=j+1}^{k-1} \delta_t e_t, & k\neq *, \\
    \mu \sum_{t=j+1}^{N} \delta_t e_t, & k = *,
\end{cases}
\]
thus taking $v= \sum_{t>j} e_t$ we get 
\[
    \langle(\hat g_k - \mu \hat x_k) - (\hat g_j - \mu \hat x_j), v\rangle =
    \begin{cases}
    L \gamma_k + \mu \sum_{t=j+1}^{k-1} \delta_t, & k\neq *, \\
    \mu \sum_{t=j+1}^{N} \delta_t, & k = *,
    \end{cases}
\]
which must be positive since $\gamma_k$ and $\{\mu \delta_i\}_{j< i<k}$ (or $\{\mu \delta_i\}_{j< i\leq N}$ when $k=*$) are nonnegative and are not all zeros, as the definition of $K^*_j$ implies that $\hat g_k - \mu \hat x_k \neq \hat g_j - \mu \hat x_j$.

We complete the proof of Theorem~\ref{T:lowerPEP} by establishing that the choice for $\{(\hat x_i, \hat g_i, \hat f_i)\}$
satisfies~\eqref{A:lowerBoundPEPg}. Note that from the symmetry in~\eqref{A:lowerBoundPEPg}, it is sufficient to verify it for $j=0,\dots,N-1, k=j+1$ and for $j=N, k=*$, for all $i\in \IN$.

\subsection{Case 1: $k=j+1$}
We start by considering the case $0\leq j\leq N-1$, $k=j+1$.
By the choice of $\hat x_i$, $\hat g_i$ it follows that $\langle \hat g_i, \hat x_i\rangle=0$ for all $i\in \IN$, thus~\eqref{A:lowerBoundPEPg} can be simplified to
\begin{align*}
\begin{aligned}
        & \frac{1}{L-\mu} \langle \hat g_i - \mu \hat x_i, \hat g_{j+1} - \hat g_j - \mu (\hat x_{j+1}  - \hat x_j) \rangle \\&\quad \geq \hat f_{j+1} - \hat f_j + \frac{1}{2(L-\mu)}( \|\hat g_{j+1} \|^2 - \|\hat g_j \|^2) + \frac{L\mu}{2(L-\mu)}(\|\hat x_{j+1}\|^2 - \|\hat x_j\|^2),
\end{aligned} \quad \begin{aligned}
            & 0\leq j\leq N - 1, \\ &i\in \IN,
\end{aligned}
\end{align*}
Substituting the choice of $\{(\hat x_i, \hat g_i, \hat f_i)\}$ we get
\begin{align*}
\begin{aligned}
        & \frac{L^2}{L-\mu} \langle \gamma_i e_i + \frac{\mu}{L} \sum_{t=0}^{i-1} e_t \delta_t , \gamma_{j+1} e_{j+1} - \gamma_j e_j  + \frac{\mu}{L} e_j \delta_j \rangle \\&\quad \geq \frac{L^2}{L-\mu} (-\gamma_j \delta_j + \gamma_{j+1} \delta_{j+1} + \frac{\mu}{L} \delta_j^2),
\end{aligned} \quad \begin{aligned}
            & 0\leq j\leq N - 1, \\ &i\in \IN,
\end{aligned}
\end{align*}
where for ease of notation we set $\gamma_*:=0$. Then from~\eqref{A:lowerPEPa} and the assumption $\mu<L$ we get that in order to establish the expression above it is sufficient to verify that the following inequality holds
\begin{align*}
\begin{aligned}
        & \langle \gamma_i e_i + \frac{\mu}{L} \sum_{t=0}^{i-1} e_t \delta_t , \gamma_{j+1} e_{j+1} - \gamma_j e_j  + \frac{\mu}{L} e_j \delta_j \rangle \geq \gamma_j (\frac{\mu}{L} \delta_j - \gamma_j),
\end{aligned} \quad \begin{aligned}
            & 0\leq j\leq N - 1, \\ &i\in \IN.
\end{aligned}
\end{align*}
We consider the following four cases:

\paragraph{1.} $i<j$. Here, the left-hand side of the inequality is zero, and the inequality follows directly from the non-negativity constraints~\eqref{A:nonneg}.

\paragraph{2.} $i=j$. For this case, the inequality above becomes
    \begin{align*}
         & -\gamma_j^2 + \frac{\mu}{L} \gamma_j\delta_j \geq \gamma_j (\frac{\mu}{L} \delta_j - \gamma_j),
    \end{align*}
    which trivially holds with equality.
    
\paragraph{3.} $i=j+1$. Here, we need to establish
    \begin{align*}
        & \gamma_{j+1}^2 - \frac{\mu}{L} \gamma_j \delta_j + \frac{\mu^2}{L^2} \delta_j^2 \geq \gamma_j (\frac{\mu}{L} \delta_j - \gamma_j),
    \end{align*}
    which holds as it reduces to
    \[
        \gamma_{j+1}^2 + (\gamma_j - \frac{\mu}{L} \delta_j)^2 \geq 0.
    \]
    
\paragraph{4.} $i>j+1$ or $i=*$. We have
    \begin{align*}
         & \frac{\mu}{L} \gamma_{j+1}\delta_{j+1} - \frac{\mu}{L} \gamma_j \delta_j + \frac{\mu^2}{L^2} \delta_j^2 \geq \gamma_j (\frac{\mu}{L} \delta_j - \gamma_j)
    \end{align*}
    which reduces to
    \[
        \frac{\mu}{L} \gamma_{j+1}\delta_{j+1} + (\gamma_j - \frac{\mu}{L} \delta_j)^2 \geq 0
    \]
    and follows from the nonnegativity assumptions~\eqref{A:nonneg}.

\subsection{Case 2: $j=N$, $k=*$}
For this case, $\gamma_*=0$ and therefore \eqref{A:lowerBoundPEPg} can be simplified to
\begin{align*}
\begin{aligned}
        & \frac{1}{L-\mu} \langle \hat g_i - \mu \hat x_i, - \hat g_N - \mu (\hat x_* - \hat x_N) \rangle \\&\quad \geq - \hat f_N  - \frac{1}{2(L-\mu)} \|\hat g_N \|^2 + \frac{L\mu}{2(L-\mu)}(\|\hat x_*\|^2 - \|\hat x_N\|^2),
\end{aligned} \quad \begin{aligned}
            & i\in \IN.
\end{aligned}
\end{align*}
Substituting $\hat x_i$, $\hat g_i$ and $\hat f_i$ with the values chosen above, we get
\begin{align*}
\begin{aligned}
        & \frac{L^2}{L-\mu} \langle \gamma_i e_i + \frac{\mu}{L} \sum_{t=0}^{i-1} e_t \delta_t, (\frac{\mu}{L} \delta_N - \gamma_N) e_N  \rangle \\ 
        & \quad \geq -\frac{L^2}{2(L-\mu)} \left( 2 \gamma_N \delta_N - \gamma_N^2 - \frac{\mu}{L} \delta_N^2 \right) - \frac{L^2}{2(L-\mu)} \gamma_N^2 + \frac{L \mu}{2(L-\mu)} \delta_N^2,
\end{aligned} \quad \begin{aligned}
            & i\in \IN.
\end{aligned}
\end{align*}
We therefore need to establish
\begin{align*}
\begin{aligned}
        & \langle \gamma_i e_i + \frac{\mu}{L} \sum_{t=0}^{i-1} e_t \delta_t, (\frac{\mu}{L} \delta_N - \gamma_N) e_N  \rangle \geq -\gamma_N \delta_N + \frac{\mu}{L}\delta_N^2,
\end{aligned} \quad \begin{aligned}
            & i\in \IN.
\end{aligned}
\end{align*}

We consider the following three cases:

\paragraph{1.} $i<N$.
Here the right-hand side of the expression is zero and the inequality follows from~\eqref{A:nonneg}.

\paragraph{2.} $i=N$. In this case, we need to show 
\begin{align*}
    & \gamma_N  \left(\frac{\mu}{L} \delta_N - \gamma_N \right) \geq -\gamma_N \delta_N + \frac{\mu}{L}\delta_N^2,
\end{align*}
which follows from~\eqref{A:nonneg}.

\paragraph{3.} $i=*$.
Finally, need to show 
\begin{align*}
    & \frac{\mu}{L} \delta_N \left( \frac{\mu}{L} \delta_N - \gamma_N \right) \geq -\gamma_N \delta_N + \frac{\mu}{L}\delta_N^2,
\end{align*}
which follows since $\delta_N\geq 0$, $\frac{\mu}{L} \delta_N - \gamma_N\leq 0$, and $\frac{\mu}{L} \leq 1$.

\section{The exact bound for $\mu=0$}\label{S:exactBound}
\begin{corollary}
Let $L>0, \Rx>0$ and $N\in \mathbb{N}$. Then
\begin{align*}
    \risk{\mathcal{F}^{0, L}_{\Rx}(\real^{2N+1})}{N}
    \geq \frac{L\Rx^2}{2\theta_N^2},
\end{align*}
where
\begin{equation}\label{D:thetadef}
\begin{aligned}
    & \theta_0 = 1, \\
    & \theta_i = \frac{1+\sqrt{1 + 4 \theta_{i-1}^2}}{2}, \quad i=1,\dots,N-1, \\
    & \theta_N =\frac{1+\sqrt{1 + 8 \theta_{N-1}^2}}{2}.
\end{aligned}    
\end{equation}
\end{corollary}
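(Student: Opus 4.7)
The plan is to apply Theorem~\ref{T:lowerPEP} with $\mu=0$ by exhibiting explicit sequences $\{\gamma_i\}_{0\le i\le N}$ and $\{\delta_i\}_{0\le i\le N}$ that make the theorem's lower bound evaluate to $L\Rx^2/(2\theta_N^2)$. In the $\mu=0$ regime, conditions \eqref{A:nonneg}--\eqref{E:rxdef} simplify to $0\le\gamma_i\le\delta_i$, $\gamma_{i+1}\delta_{i+1}\le\gamma_i(\delta_i-\gamma_i)$, and $\sum_{t=0}^N\delta_t^2\le\Rx^2$, and the conclusion becomes $\risk{\mathcal{F}^{0,L}_{\Rx}(\real^{2N+1})}{N}\ge\tfrac{L}{2}(2\gamma_N\delta_N-\gamma_N^2)$. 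So it suffices to choose the sequences so that all three conditions hold and $2\gamma_N\delta_N-\gamma_N^2=\Rx^2/\theta_N^2$.

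The construction is motivated by the KKT optimality conditions of the problem of maximizing $2\gamma_N\delta_N-\gamma_N^2$ subject to the three constraints above; those conditions force the ratios $\delta_i/\gamma_i$ to satisfy the Nesterov-type recursion solved by $2\theta_i$. Concretely, I would set $\delta_i=2\theta_i\gamma_i$ for $0\le i\le N-1$ and $\delta_N=\theta_N\gamma_N$ (the different factor at the last index will match the modified last-step $\theta$-recursion), enforce equality $\gamma_{i+1}\delta_{i+1}=\gamma_i(\delta_i-\gamma_i)$, and normalize $\gamma_0$ so that $\sum_{i=0}^N\delta_i^2=\Rx^2$. Because $\theta_i\ge 1$, the constraints $0\le\gamma_i\le\delta_i$ hold automatically, \eqref{A:lowerPEPa} holds with equality, and \eqref{E:rxdef} holds with equality by the choice of $\gamma_0$.

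To identify the resulting bound, I would pass to the variables $\pi_i:=\gamma_i\delta_i$ and $T_i:=\prod_{k=0}^{i-1}(2\theta_k-1)/(2\theta_k)$. The recursion gives $\pi_i=\pi_0 T_i$; moreover $\delta_i^2=2\theta_i\pi_i$ for $i<N$ and $\delta_N^2=\theta_N\pi_N$, while $2\gamma_N\delta_N-\gamma_N^2=\pi_N(2\theta_N-1)/\theta_N$. The target identity $2\gamma_N\delta_N-\gamma_N^2=\Rx^2/\theta_N^2$ then reduces, after dividing out $\pi_0$ using the normalization, to the purely combinatorial identity
\[
\theta_N T_N (\theta_N-1)=\sum_{i=0}^{N-1}\theta_i T_i.
\]

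The main obstacle is proving this identity, and it is where the careful structure of \eqref{D:thetadef} is used. I would prove by induction on $j$ that $\theta_j(2\theta_j-1)T_j=\sum_{i=0}^{j}\theta_i T_i$ for $0\le j\le N-1$, the base case $j=0$ being immediate from $\theta_0=1$, $T_0=1$. The inductive step uses the interior recursion $\theta_{j+1}(\theta_{j+1}-1)=\theta_j^2$ to rewrite $\theta_{j+1}(2\theta_{j+1}-1)T_{j+1}-\theta_{j+1}T_{j+1}=2\theta_{j+1}(\theta_{j+1}-1)T_{j+1}=2\theta_j^2 T_{j+1}$, then collapses $2\theta_j T_{j+1}=(2\theta_j-1)T_j$ to obtain $\theta_j(2\theta_j-1)T_j$, which equals $\sum_{i=0}^{j}\theta_i T_i$ by hypothesis. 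Finally, the modified last-step formula $\theta_N(\theta_N-1)=2\theta_{N-1}^2$ together with $2\theta_{N-1}T_N=(2\theta_{N-1}-1)T_{N-1}$ yields $\theta_N T_N(\theta_N-1)=\theta_{N-1}(2\theta_{N-1}-1)T_{N-1}=\sum_{i=0}^{N-1}\theta_i T_i$, closing the argument. The bookkeeping verification of \eqref{A:nonneg}--\eqref{E:rxdef} is straightforward; all the content lives in this telescoping identity, and in particular in the clean handoff between the interior recursion (with a ``$4$'') and the terminal recursion (with an ``$8$''), mediated by the deliberate choice $\delta_N=\theta_N\gamma_N$ rather than $2\theta_N\gamma_N$.
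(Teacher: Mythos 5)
Your proposal is correct, and at its core it instantiates Theorem~\ref{T:lowerPEP} (with $\mu=0$) with the very same sequences the paper uses: in the paper's parametrization $\gamma_i=\sqrt{\zeta_i-\zeta_{i+1}}$, $\delta_i=\zeta_i/\sqrt{\zeta_i-\zeta_{i+1}}$, the recursion $\zeta_i=\tfrac{2\theta_i}{2\theta_i-1}\zeta_{i+1}$ gives $\zeta_i-\zeta_{i+1}=\zeta_i/(2\theta_i)$ and $\zeta_N-\zeta_{N+1}=\zeta_N/\theta_N$, hence $\delta_i=2\theta_i\gamma_i$ for $i<N$ and $\delta_N=\theta_N\gamma_N$, which is exactly your ansatz. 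The difference lies in how the required identities are verified: the paper simply cites Lemma~3 of \cite{drori2017exact} for the equalities $\gamma_{i+1}\delta_{i+1}=\gamma_i(\delta_i-\gamma_i)$, the normalization of $\sum_i\delta_i^2$, and $2\gamma_N\delta_N-\gamma_N^2=\Rx^2/\theta_N^2$, whereas you reduce everything, after scaling $\gamma_0$, to the single telescoping identity $\theta_N(\theta_N-1)T_N=\sum_{i=0}^{N-1}\theta_i T_i$ and prove it by induction from $\theta_{j+1}(\theta_{j+1}-1)=\theta_j^2$ (the ``4'' recursion) and $\theta_N(\theta_N-1)=2\theta_{N-1}^2$ (the ``8'' recursion). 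I checked your reduction ($\pi_i=\pi_0T_i$, $\delta_i^2=2\theta_i\pi_i$ for $i<N$, $\delta_N^2=\theta_N\pi_N$, $2\gamma_N\delta_N-\gamma_N^2=\pi_N(2\theta_N-1)/\theta_N$) and the induction, including the handoff at the last index, and they are sound. What your route buys is a fully self-contained proof that does not lean on the earlier paper; what the paper's route buys is brevity. The bookkeeping you leave implicit---positivity of the $\gamma_i$ (take positive square roots in the recursion), the homogeneity that lets you normalize $\sum_i\delta_i^2=\Rx^2$ by scaling $\gamma_0$, and condition~\eqref{A:nonneg} with $\mu=0$ following from $\theta_i\geq 1$---is indeed routine.
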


\begin{proof}
Let
\begin{align*}
    & \zeta_i=\frac{2\theta_i}{2\theta_i-1}\zeta_{i+1}, \quad i=0,\dots,N-1,\\
    & \zeta_N = \frac{\theta_N}{\theta_N-1}\zeta_{N+1}, \\
    & \zeta_{N+1} = \frac{\theta_N-1}{\theta_N^2(2\theta_N-1)} \Rx^2, \\
    & \zeta_{N+2} = 0,
\end{align*}
and set
\begin{align*}
    \gamma_i := \sqrt{\zeta_i - \zeta_{i+1}}, \quad i=0,\dots,N,\\
    \delta_i := \frac{\zeta_i}{\sqrt{\zeta_i - \zeta_{i+1}}}, \quad i=0,\dots,N.
\end{align*}

To complete the proof, it is sufficient to show that
\begin{align*}
    & \gamma_{i+1} \delta_{i+1} =  -(\gamma_i - \delta_i) \gamma_i, \quad i=0,\dots,N-1, \\
    & \sum_{i=0}^N \delta_i^2 \leq 1,
\end{align*}
and 
\[
    2 \gamma_N \delta_N - \gamma_N^2 = \frac{1}{2\theta_N^2},
\]
which are established as part of the proof of \cite[Lemma~3]{drori2017exact}.
\end{proof}

\section{Proof of Corollary~\ref{C:exactLower}} \label{A:exactLowerProof}
For the sake of conciseness, let us denote by $\invcond:=\mu/L$ the inverse condition number, which we assume to be in the open interval $(0,1)$.
We begin the proof by showing that the sequence $\sz_i$ is well-defined. This follows by a simple inductive argument: suppose $0\leq \sz_i \leq \sqrt{\invcond}$, then $\invcond^2 \leq \invcond - (1 - \invcond) \sz_i^2\leq \invcond$ and we therefore have
\begin{equation}\label{E:szUpperLowerBound}
    0 \leq \frac{1-\sqrt{\invcond}}{1 + \invcond} \sz_i\leq \sz_{i+1} 
    = \frac{1-\sqrt{\invcond - (1 - \invcond) \sz_i^2}}{1+ \sz_i^2} \sz_i \leq (1-\invcond) \sz_i \leq \sqrt{\invcond}.
\end{equation}
In particular, $\invcond - (1 - \invcond) \sz_i^2\geq 0$ for all $i$ and thus the sequence $\sz_i$ is well-defined.

The main part of the proof also proceeds by induction.
Assume without loss of generality that $\Rx=1$ and suppose $\{\gamma_i\}_{0\leq i\leq N}$, $\{\delta_i\}_{0\leq i\leq N}$ satisfy
\begin{align*}
    & 0\leq \invcond \delta_i \leq \gamma_i \leq \delta_i, \quad i=0,\dots,N, \\
    & \gamma_{i+1} \delta_{i+1} = - (\gamma_i - \delta_i)(\gamma_i - \invcond \delta_i), \quad i=0,\dots,N-1, \\
    & \sum_{i=0}^N \delta_i^2 = 1, \\
    & \gamma_{N} = \invcond^{1/2} \sz_N, \quad \delta_N = \invcond^{-1/2} \sz_N.
\end{align*}
We proceed to show that $\{\hat \gamma_i\}_{0\leq i\leq N+1}$, $\{\hat \delta_i\}_{0\leq i\leq N+1}$
defined by
\begin{align*}
    \hat \gamma_i :=& \sqrt{\alpha_{N+1}} \gamma_i, \quad 0\leq i\leq N - 1, \\
    \hat \gamma_N 
    :=& \sqrt{\invcond  \left(\alpha_{N+1} \sz_N^2 + \alpha_{N+1} -1\right)} \\
    \hat \gamma_{N+1} :=& \invcond^{1/2} \sz_{N+1}, \\
    \hat \delta_i :=& \sqrt{\alpha_{N+1}} \delta_i, \quad 0\leq i\leq N - 1, \\
    \hat \delta_N 
    :=& \sqrt{\invcond^{-1}\left(\alpha_{N+1} \sz_N^2 - \sz_{N+1}^2 - \invcond(\alpha_{N+1} - 1)\right)} \\
    \hat \delta_{N+1} :=& \invcond^{-1/2} \sz_{N+1},
\end{align*}
with
\begin{align*}
    \alpha_{N+1} 
    := \sqrt{\frac{\invcond - \sz_{N+1}^2}{\invcond - (1 - \invcond) \sz_N^2}}
\end{align*}
satisfy
\begin{align*}
    & 0\leq \invcond \hat \delta_i \leq \hat \gamma_i \leq \hat \delta_i, \quad i=0,\dots,N+1, \\
    & \hat \gamma_{i+1} \hat \delta_{i+1} = - (\hat \gamma_i - \hat \delta_i)(\hat \gamma_i - \invcond \hat \delta_i), \quad i=0,\dots,N, \\
    & \hat \gamma_{N+1} \hat \delta_{N+1} = \sz_{N+1}^2, \\
    & \sum_{i=0}^{N+1} \hat \delta_i^2 = 1.
\end{align*}

We show that the expressions above are well-defined in \S\ref{S:wellDefinedAlphaGammaDelta} and establish the relations above in~\S\ref{S:gammaDeltaIneq}--\S\ref{S:deltaSumEq}.
For the base of the induction, we take $\gamma_0 = \invcond^{-1}$, $\delta_0=1$.
Finally, the claim of Corollary~\ref{C:exactLower} follows directly from Theorem~\ref{T:lowerPEP}.

\subsection{$\alpha_{N+1}$, $\hat \gamma_N$ and $\hat \delta_N$ are well-defined}\label{S:wellDefinedAlphaGammaDelta}
The well-definiteness of $\alpha_{N+1}$ follows immediately from $0\leq \sz_N^2,\sz_{N+1}^2 \leq \invcond$.

Next, we show 
\begin{align*}
    & \alpha_{N+1} \sz_N^2 + \alpha_{N+1} - 1 \geq 0, \\
    & \alpha_{N+1} \sz_N^2 - \sz_{N+1}^2 - \invcond (\alpha_{N+1} - 1)\geq 0,
\end{align*}
and along the way, establish convenient expressions for $\hat \gamma_N$ and $\hat \delta_N$.
First, let us show the following identity
\begin{align*}
    \invcond - \sz_{N+1}^2 
    & = \invcond - \left(\frac{1-\sqrt{\invcond - (1 - \invcond) \sz_N^2}}{1+ \sz_N^2}\right)^2 \sz_N^2 \\
    & = \frac{\invcond (1 + \sz_N^2)^2 - \sz_N^2\left(1-\sqrt{\invcond - (1 - \invcond) \sz_N^2}\right)^2}{(1 + \sz_N^2)^2} \\
    & = \left(\frac{\sz_N^2 + \sqrt{\invcond - (1 - \invcond) \sz_N^2}}{1 + \sz_N^2}\right)^2.
\end{align*}
We get
\begin{align*}
    \alpha_{N+1} \sz_N^2 + \alpha_{N+1} - 1
    & = \frac{(1 + \sz_N^2)\sqrt{\invcond - \sz_{N+1}^2}}{\sqrt{\invcond - (1 - \invcond) \sz_N^2}} - 1 \\
    & = \frac{\sz_N^2 + \sqrt{\invcond - (1 - \invcond) \sz_N^2}}{\sqrt{\invcond - (1 - \invcond) \sz_N^2}} - 1 \\
    & = \frac{\sz_N^2}{\sqrt{\invcond - (1 - \invcond) \sz_N^2}} \geq 0
\end{align*}
and
\begin{align*}
    & \alpha_{N+1} \sz_N^2 - \sz_{N+1}^2 - \invcond (\alpha_{N+1} - 1) \\
    & = - \alpha_{N+1} (\invcond - \sz_N^2) + \invcond - \sz_{N+1}^2 \\
    & = -\sqrt{\frac{\invcond - \sz_{N+1}^2}{\invcond - (1 - \invcond) \sz_N^2}} (\invcond - \sz_N^2) + (\invcond - \sz_{N+1}^2) \\
    & = \sqrt{\frac{\invcond - \sz_{N+1}^2}{\invcond - (1 - \invcond) \sz_N^2}} \left(-(\invcond - \sz_N^2) + \sqrt{\invcond - \sz_{N+1}^2}\left(\sqrt{\invcond - (1 - \invcond) \sz_N^2} - \sz_N^2\right) + \sz_N^2 \sqrt{\invcond - \sz_{N+1}^2} \right) \\
    & = \sqrt{\frac{\invcond - \sz_{N+1}^2}{\invcond - (1 - \invcond) \sz_N^2}} \left(-(\invcond - \sz_N^2) + \frac{\left(\sz_N^2 + \sqrt{\invcond - (1 - \invcond) \sz_N^2}\right)\left(\sqrt{\invcond - (1 - \invcond) \sz_N^2} - \sz_N^2\right)}{1 + \sz_N^2}\right) \\&\qquad + \frac{\sz_N^2 (\invcond - \sz_{N+1}^2)}{\sqrt{\invcond - (1 - \invcond) \sz_N^2}} \\
    & = \sqrt{\frac{\invcond - \sz_{N+1}^2}{\invcond - (1 - \invcond) \sz_N^2}} \left(-(\invcond - \sz_N^2) + \frac{(\invcond - \sz_N^2)(1 + \sz_N^2)}{1 + \sz_N^2}\right) + \frac{\sz_N^2 (\invcond - \sz_{N+1}^2)}{\sqrt{\invcond - (1 - \invcond) \sz_N^2}} \\
    & = \frac{\sz_N^2 (\invcond - \sz_{N+1}^2)}{\sqrt{\invcond - (1 - \invcond) \sz_N^2}} \geq 0.
\end{align*}

In view of the identities above and the definitions of $\hat \gamma_N^2$ and $\hat \delta_N^2$ we immediately get
\begin{align}
    & \hat \gamma_N^2 = \frac{\invcond \sz_N^2}{\sqrt{\invcond - (1 - \invcond) \sz_N^2}}, \label{E:gammaDefB} \\
    & \hat \delta_N^2 = \frac{\sz_N^2 (\invcond - \sz_{N+1}^2)}{\invcond \sqrt{\invcond - (1 - \invcond) \sz_N^2}}. \label{E:deltaDefB}
\end{align}

\subsection{$0\leq \invcond \hat \delta_i \leq \hat \gamma_i \leq \hat \delta_i, \quad i=0,\dots,N+1$}\label{S:gammaDeltaIneq}
For $i=0,\dots,N-1$ the inequalities are immediate from the assumptions on $\gamma_i$, $\delta_i$.
For $i=N$, $0\leq \invcond \hat \delta_N \leq \hat \gamma_N$ is immediate and $\hat \gamma_N \leq \hat \delta_N$ follows from~\eqref{E:gammaDefB} and~\eqref{E:deltaDefB} by
\[
    \frac{\invcond - \sz_{N+1}^2}{\invcond} \geq \frac{\invcond - (1-\invcond)^2 \invcond}{\invcond} = 2\invcond-\invcond^2 \geq \invcond,
\]
where the leftmost inequality follows from the upper bound on $\sz_{i+1}$~\eqref{E:szUpperLowerBound}.
For $i=N+1$ the bound follows directly from the definition.

\subsection{$\hat \gamma_{i+1} \hat \delta_{i+1} = - (\hat \gamma_i - \hat \delta_i)(\hat \gamma_i - \invcond \hat \delta_i), \quad i=0,\dots,N$}
For $i=0,\dots,N-2$ the equality is immediate from the assumptions on $\gamma_i$, $\delta_i$ and the nonnegativity of $\alpha_{N+1}$.

When $i=N-1$, we have from~\eqref{E:gammaDefB} and~\eqref{E:deltaDefB}
\begin{align*}
    \hat \gamma_N \hat \delta_N
    & = \sqrt{\frac{\invcond \sz_N^2}{\sqrt{\invcond - (1 - \invcond) \sz_N^2}} \frac{\sz_N^2 (\invcond - \sz_{N+1}^2)}{\invcond \sqrt{\invcond - (1 - \invcond) \sz_N^2}}} \\
    & = \sqrt{\frac{\invcond - \sz_{N+1}^2}{\invcond - (1 - \invcond) \sz_N^2}} \sz_N^2 \\
    & = \alpha_{N+1} \sz_N^2 
    = - \alpha_{N+1} (\gamma_{N-1} - \delta_{N-1})(\gamma_{N-1} - \hat \delta_{N-1}) \\
    & = - (\hat \gamma_{N-1} - \hat \delta_{N-1})(\hat \gamma_{N-1} - \invcond \hat \delta_{N-1}).
\end{align*}
When $i=N$ we have
\begin{align*}
    & - (\hat \gamma_N - \hat \delta_N)(\hat \gamma_N - \invcond \hat \delta_N) \\
    & = - \hat \gamma_N^2 - \invcond \hat \delta_N^2 + (1+\invcond)  \hat \gamma_N \hat \delta_N \\
    & = - \invcond  \left(\alpha_{N+1} \sz_N^2 + \alpha_{N+1} -1\right) - \left(\alpha_{N+1} \sz_N^2 - \sz_{N+1}^2 - \invcond(\alpha_{N+1} - 1)\right) + (1+\invcond) \alpha_{N+1} \sz_N^2 \\
    & = -(1 + \invcond) \alpha_{N+1} \sz_N^2 + \sz_{N+1}^2 + (1+\invcond) \alpha_{N+1} \sz_N^2 \\
    & =  \sz_{N+1}^2 = \hat \gamma_{N+1} \hat \delta_{N+1}.
\end{align*}

\subsection{$\sum_{i=0}^{N+1} \hat \delta_i^2 = 1$}\label{S:deltaSumEq}
We have
\begin{align*}
    \sum_{i=0}^{N+1} \hat \delta_i^2 
    & = \alpha_{N+1} \sum_{i=0}^{N-1} \delta_i^2 + \invcond^{-1}\left(\alpha_{N+1} \sz_N^2 - \sz_{N+1}^2 - \invcond \alpha_{N+1} + \invcond\right) + \frac{\sz_{N+1}^2}{\invcond} \\
    & = \alpha_{N+1} \sum_{i=0}^{N} \delta_i^2 + \left(1 - \alpha_{N+1}-\frac{\sz_{N+1}^2}{\invcond}\right) + \frac{\sz_{N+1}^2}{\invcond} \\
    & =  1.
\end{align*}

\bibliographystyle{abbrv}
\bibliography{bib}{}   

\end{document}